\newtheorem{theorem}{Theorem}
\newtheorem{lemma}{Lemma}
\newtheorem{example}{Example}
\newtheorem{algorithm}{Algorithm}
\newtheorem{remark}{Remark}
\newtheorem{proposition}{Proposition}
\newtheorem{definition}{Definition}
\newtheorem{corollary}{Corollary}
\newcommand{\ri}{\mathrm{i}}
\newcommand{\rd}{\, \mathrm{d}}
\newcommand{\bsk}{\boldsymbol{k}}
\newcommand{\bsell}{\boldsymbol{\ell}}
\newcommand{\bszero}{\boldsymbol{0}}
\newcommand{\bsgamma}{\boldsymbol{\gamma}}
\newcommand{\bsq}{\boldsymbol{q}}
\newcommand{\bsx}{\boldsymbol{x}}
\newcommand{\bsy}{\boldsymbol{y}}
\newcommand{\bsz}{\boldsymbol{z}}
\newcommand{\Kor}{\mathrm{Kor}}
\newcommand{\tr}{\mathrm{tr}}
\newcommand{\wal}{\mathrm{wal}}
\newcommand{\wor}{\mathrm{wor}}
\newcommand{\CC}{\mathbb{C}}
\newcommand{\FF}{\mathbb{F}}
\newcommand{\NN}{\mathbb{N}}
\newcommand{\RR}{\mathbb{R}}
\newcommand{\ZZ}{\mathbb{Z}}
\title{Stability of lattice rules and polynomial lattice rules constructed by the component-by-component algorithm}
\author{Josef Dick\thanks{School of Mathematics and Statistics, University of New South Wales, Sydney NSW 2052, Australia ({\tt josef.dick@unsw.edu.au})}, 
Takashi Goda\thanks{School of Engineering, University of Tokyo, 7-3-1 Bunkyo-ku, Hongo, Tokyo 113-8656, Japan ({\tt goda@frcer.t.u-tokyo.ac.jp})}}
\date{\today}
\begin{document}
\maketitle
\begin{abstract}
We study quasi-Monte Carlo (QMC) methods for numerical integration of multivariate functions defined over the high-dimensional unit cube. Lattice rules and polynomial lattice rules, which are special classes of QMC methods, have been intensively studied and the so-called component-by-component (CBC) algorithm has been well-established to construct  rules which achieve the almost optimal rate of convergence with good tractability properties for given smoothness and set of weights. Since the CBC algorithm constructs rules for given smoothness and weights, not much is known when such rules are used for function classes with different smoothness and/or weights.

In this paper we prove that a lattice rule constructed by the CBC algorithm for the weighted Korobov space with given smoothness and weights achieves the almost optimal rate of convergence with good tractability properties for general classes of smoothness and weights which satisfy some summability conditions. Such a stability result also can be shown for polynomial lattice rules in weighted Walsh spaces. We further give bounds on the weighted star discrepancy and discuss the tractability properties for these QMC rules. The results are comparable to those obtained for Halton, Sobol and Niederreiter sequences.

\noindent {\bf Keywords:} Quasi-Monte Carlo, Lattice rules, Polynomial lattice rules, Stability, Tractability, Weighted star discrepancy.

\noindent {\bf MSC classifications:} 11K38, 65C05, 65D30, 65D32.
\end{abstract}

\section{Introduction}
We study numerical integration of multivariate functions defined over the $s$-dimensional unit cube $[0,1)^s$. For an integrable function $f\colon [0,1)^s\to \RR$ we denote the integral of $f$ by
\[ I(f)=\int_{[0,1)^s}f(\bsx)\rd \bsx. \]
For an $N$-element set $P\subset [0,1)^s$, we consider approximating $I(f)$ by
\[ Q_P(f)=\frac{1}{N}\sum_{\bsx\in P}f(\bsx). \]
Such an equal-weight quadrature rule is called a quasi-Monte Carlo (QMC) rule.
We refer to \cite{Niedbook,SJbook,DPbook,DKS13,LPbook} for comprehensive information on QMC rules. 

For a Banach space $W$ with norm $\|\cdot\|_W$, the worst-case error by the algorithm $Q_P$ is defined by
\[ e^{\wor}(P;W) := \sup_{\substack{f\in W\\ \|f\|_W\leq 1}}\left| I(f)-Q_P(f) \right|. \]
Our interest is then to construct a good point set $P$ such that the algorithm $Q_P$ makes $e^{\wor}(P;W)$ small for a given $W$.
However, it is often difficult to know whether one point set $P$ constructed for a certain function space works well for different function spaces as well.
In this paper we show some positive results on this question for lattice rules in weighted Korobov spaces and also for polynomial lattice rules in weighted Walsh spaces.

Lattice rules and polynomial lattice rules are defined respectively as follows (here and in what follows, we denote the set of positive integers by $\NN$):
\begin{definition}[lattice rules]\label{def:lat}
For $N\in \NN$, let $\bsz = (z_1, \ldots, z_s) \in \{1,\ldots,N-1\}^s$. An $N$-element lattice point set is given by
\[ P(\bsz) = \left\{ \left( \left\{ \frac{nz_1}{N}\right\}, \ldots, \left\{ \frac{nz_s}{N}\right\} \right) \mid n=0,1,\ldots, N-1\right\}, \]
where $\{x\} = x - \lfloor x \rfloor$ denotes the fractional part of a non-negative real numbers $x$. The resulting QMC algorithm $Q_{P(\bsz)}$ is called a lattice rule with generating vector $\bsz$.
\end{definition}

\begin{definition}[polynomial lattice rules]\label{def:polylat}
For a prime $b$ and $m\in \NN$, let $p \in \FF_b[x]$ be a polynomial of degree $\deg(p) = m$ over the finite field $\FF_b$ of order $b$ and let $\bsq = (q_1, \ldots, q_s) \in (G_m\setminus \{0\})^s$ where we write $G_m = \{g \in \FF_b[x]: \deg(g) < m\}$. A $b^m$-element polynomial lattice point set is given by
\[ P(p,\bsq) = \left\{ \left( \nu_m \left( \frac{n(x)q_1(x)}{p(x)}\right), \ldots, \nu_m \left( \frac{n(x)q_s(x)}{p(x)}\right) \right) \mid n\in G_m\right\}, \]
where we write
\[ \nu_m\left( \sum_{i=w}^{\infty}t_i x^{-i}\right) = \sum_{i=\max(w,1)}^{m}t_i b^{-i}\in [0,1) .\]
The resulting QMC algorithm $Q_{P(p,\bsq)}$ is called a polynomial lattice rule with modulus $p$ and generating vector $\bsq$.
\end{definition}
\noindent To simplify the notation, we hide the dependence of $\bsz$ (resp., $\bsq$) on $N$ (resp., $m$) in the following.

For $s>2$, there is no known explicit construction, free from any computer search or table lookup, of generating vectors $\bsz$ for lattice rules or $\bsq$ for polynomial lattice rules. Instead, the so-called component-by-component (CBC) algorithm, a greedy algorithm which iteratively searches for one component $z_j$ (or $q_j$) with earlier ones $z_1,\ldots,z_{j-1}$ (or, $q_1,\ldots,q_{j-1}$, respectively) kept unchanged, has been well-established, see for instance \cite{SKJ02,SKJ02b,SR02,Kuo03,DKPS05,CKN06,NC06} among many others. 

After the seminal work of Sloan and Wo\'{z}niakowski \cite{SW98}, it has been standard to consider \emph{weighted} function spaces when constructing point sets,
where a set of weight parameters is introduced in the definition of function spaces to play a role in moderating the relative importance of different variables or groups of variables.
It can be shown under some conditions on the weights that the worst-case error bound for good (polynomial) lattice rules depends only polynomially on the dimension $s$, or even, that the error bound is dimension-independent, see for instance \cite{DP05,Kuo03,SKJ02,SKJ02b}. To prove such tractability results for lattice rules or polynomial lattice rules, not only the smoothness parameter of the function space but also a set of weight parameters are required as inputs in the CBC algorithm. In general, it is unknown whether one QMC rule constructed by the CBC algorithm for given smoothness and weights does also work well for different smoothness and weights.

In this paper we prove that a lattice rule constructed by the CBC algorithm for the weighted Korobov space with certain smoothness and weights achieves the almost optimal rate of convergence with good tractability properties for general classes of smoothness and weights which satisfy some summability conditions. The result with respect to the smoothness parameter is well understood and can readily be derived from \cite[Chapter~5]{Niedbook} and Jensen's inequality. We also refer to an argument in \cite[Section~4.4]{LPbook} which considers the CBC algorithm with a different quality criterion independent of smoothness. However, the stability with respect to the weights is much less known, and our present result is new. Moreover, we show a similar result for polynomial lattice rules in weighted Walsh spaces. We also give bounds on the weighted star discrepancy and discuss the tractability properties of lattice rules and polynomial lattice rules.

\section{Stability of lattice rules with respect to changes in smoothness and weights}\label{sec:Lat}
In this section, we study stability of lattice rules in weighted Korobov spaces.

\subsection{Weighted Korobov space}
Let $f: [0,1)^s \to \RR$ be periodic and given by its Fourier series
\[ f(\bsx) = \sum_{\bsk \in \ZZ^s} \hat{f}(\bsk) \exp(2\pi \ri \bsk\cdot \bsx),\]
where the dot $\cdot$ denotes the usual inner product of two vectors and $\hat{f}(\bsk)$ denotes the $\bsk$-th Fourier coefficient defined by
\[ \hat{f}(\bsk) = \int_{[0,1)^s}f(\bsx)\exp(-2\pi \ri \bsk\cdot \bsx)\rd \bsx. \]

We measure the smoothness of periodic functions by a parameter $\alpha>1/2$.
A set of weight parameters $\bsgamma = (\gamma_u)_{u\subset \NN}$ with $\gamma_u\in \RR_{\geq 0}$ is considered to moderate the relative importance of different variables or groups of variables. 
For a non-empty subset $u\subseteq \{1,\ldots,s\}$ and $\bsk_u\in (\ZZ\setminus \{0\})^{|u|}$, we define
\[ r_\alpha(\bsgamma, \bsk_u) := \gamma_u \prod_{j\in u}\frac{1}{|k_j|^{2\alpha}}. \]
Then the weighted Korobov space with smoothness $\alpha$, denoted by $H^{\Kor}_{\alpha,\bsgamma}$, is a reproducing kernel Hilbert space with the reproducing kernel \cite{H98}
\[ K^{\Kor}_{\alpha,\bsgamma}(\bsx,\bsy) = 1+\sum_{\emptyset\neq u \subseteq \{1, \ldots, s\}}\sum_{\bsk_u \in (\ZZ\setminus \{0\})^{|u|}} r_\alpha(\bsgamma, \bsk_u)\exp(2\pi \ri \bsk_u\cdot (\bsx_u-\bsy_u)), \]
where we write $\bsx_u=(x_j)_{j\in u}$ and $\bsy_u=(y_j)_{j\in u}$. The inner product of the space $H^{\Kor}_{\alpha,\bsgamma}$ is given by
\[ \langle f, g\rangle^{\Kor}_{\alpha,\bsgamma} = \hat{f}(\bszero)\hat{g}(\bszero)+\sum_{\emptyset\neq u \subseteq \{1, \ldots, s\}}\sum_{\bsk_u \in (\ZZ\setminus \{0\})^{|u|}}  \frac{\hat{f}(\bsk_u,\bszero)\hat{g}(\bsk_u,\bszero)}{r_\alpha(\bsgamma, \bsk_u)} . \]
Here, for a non-empty subset $u\subseteq \{1,\ldots,s\}$ such that $\gamma_u=0$, we assume that the corresponding inner sum equals 0 and we set $0/0=0$.
The induced norm is then given by
\[ \| f\|^{\Kor}_{\alpha,\bsgamma} = \sqrt{|\hat{f}(\bszero)|^2+\sum_{\emptyset\neq u \subseteq \{1, \ldots, s\}}\sum_{\bsk_u \in (\ZZ\setminus \{0\})^{|u|}} \frac{|\hat{f}(\bsk_u,\bszero)|^2}{r_\alpha(\bsgamma, \bsk_u)}} . \]

\subsection{CBC algorithm for lattice rules}
In order to construct a good lattice rule which works for the weighted Korobov space $H^{\Kor}_{\alpha,\bsgamma}$ with certain $\alpha$ and $\bsgamma$, we consider the worst-case error $e^{\wor}(P(\bsz);H^{\Kor}_{\alpha,\bsgamma})$ as a quality criterion. Since the worst-case error depends only on the generating vector $\bsz$ for fixed $N$, we simply write $e^{\wor}(\bsz;H^{\Kor}_{\alpha,\bsgamma})$.
It follows from the reproducing property of $H^{\Kor}_{\alpha,\bsgamma}$ that we have
\begin{align*}
 \left(e^{\wor}(\bsz; H^{\Kor}_{\alpha,\bsgamma})\right)^2 & = \int_{[0,1)^s}\int_{[0,1)^s}K^{\Kor}_{\alpha,\bsgamma}(\bsx,\bsy)\rd \bsz\rd \bsy \\
&\quad  -\frac{2}{N}\sum_{\bsx\in P(\bsz)}\int_{[0,1)^s}K^{\Kor}_{\alpha,\bsgamma}(\bsx,\bsy)\rd \bsy+\frac{1}{N^2}\sum_{\bsx,\bsy\in P(\bsz)}K^{\Kor}_{\alpha,\bsgamma}(\bsx,\bsy).
\end{align*} 

Define the dual lattice for $P(\bsz)$ by
\[ P^{\perp}(\bsz)= \{\bsk\in \ZZ^s\mid \bsk\cdot \bsz\equiv 0 \pmod N\}. \]
Then the following property is well known.
\begin{lemma}
For $N\in \NN$ and $\bsz\in \{1,\ldots,N-1\}^s$, we have
\[ \frac{1}{N}\sum_{\bsx\in P(\bsz)}\exp(2\pi \ri \bsk\cdot \bsz)=\begin{cases} 1 & \text{if $\bsk\in P^\perp(\bsz)$,} \\ 0 & \text{otherwise.} \end{cases} \]
\end{lemma}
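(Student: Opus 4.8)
The plan is to collapse the character sum over the $s$-dimensional lattice point set into a single geometric sum over the cyclic group $\ZZ/N\ZZ$ and then invoke orthogonality of characters. (Note that the exponent in the statement should read $\exp(2\pi\ri\,\bsk\cdot\bsx)$, the sum being over $\bsx\in P(\bsz)$.)

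First I would index the points of $P(\bsz)$ by $n\in\{0,1,\ldots,N-1\}$, setting $\bsx_n=(\{nz_1/N\},\ldots,\{nz_s/N\})$ and reading $\sum_{\bsx\in P(\bsz)}$ as $\sum_{n=0}^{N-1}$ (the usual QMC convention, relevant only when $P(\bsz)$ has coincident points). The key step is that the fractional parts do not affect the exponential: since each $k_j$ is an integer, $\exp(2\pi\ri\,k_j\{nz_j/N\})=\exp(2\pi\ri\,k_jnz_j/N)$ because $\{nz_j/N\}$ and $nz_j/N$ differ by an integer. Multiplying over $j\in\{1,\ldots,s\}$ yields $\exp(2\pi\ri\,\bsk\cdot\bsx_n)=\exp(2\pi\ri\,n(\bsk\cdot\bsz)/N)$, so the left-hand side equals $\frac1N\sum_{n=0}^{N-1}\omega^n$ with $\omega:=\exp(2\pi\ri(\bsk\cdot\bsz)/N)$.

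It then remains to split into two cases according to the definition of $P^\perp(\bsz)$. If $\bsk\in P^\perp(\bsz)$, then $N\mid\bsk\cdot\bsz$, hence $\omega=1$ and the sum equals $N/N=1$. If $\bsk\notin P^\perp(\bsz)$, then $\omega$ is an $N$-th root of unity with $\omega\neq1$, so $\sum_{n=0}^{N-1}\omega^n=(\omega^N-1)/(\omega-1)=0$. I do not anticipate a genuine obstacle here: the only point needing a little care is the bookkeeping with the fractional part (and the multiset interpretation of $P(\bsz)$ when $\gcd(z_j,N)>1$ for every $j$), after which the claim is just the standard orthogonality relation for the characters of $\ZZ/N\ZZ$.
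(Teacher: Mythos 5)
Your proof is correct and is the standard orthogonality argument; the paper itself gives no proof of this lemma, stating it only as ``well known.'' You are also right that the displayed formula in the paper contains a typo: the exponent should read $\exp(2\pi\ri\,\bsk\cdot\bsx)$ rather than $\exp(2\pi\ri\,\bsk\cdot\bsz)$, since the sum runs over $\bsx\in P(\bsz)$.
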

For a non-empty subset $u\subseteq \{1,\ldots,s\}$, define
\[ P^{\perp}_u(\bsz):= \{\bsk_u \in (\ZZ\setminus \{0\})^{|u|}\mid (\bsk_u,\bszero)\in P^{\perp}(\bsz)\}. \]
Then the squared worst-case error is given by
\[ \left(e^{\wor}(\bsz; H^{\Kor}_{\alpha,\bsgamma})\right)^2 = \sum_{\emptyset\neq u \subseteq \{1, \ldots, s\}}\sum_{\bsk_u\in P^{\perp}_u(\bsz)}r_{\alpha}(\bsgamma, \bsk_u) =: P_{\alpha,\bsgamma,N}(\bsz). \]
There is a concise computable form of the criterion $P_{\alpha,\bsgamma,N}(\bsz)$ when $2\alpha$ is a natural number
\[ P_{\alpha,\bsgamma,N}(\bsz) = \frac{1}{N}\sum_{\bsx\in P(\bsz)}\sum_{\emptyset\neq u \subseteq \{1, \ldots, s\}}\gamma_u\prod_{j\in u}\left( \frac{(2\pi)^{2\alpha}}{(-1)^{\alpha+1}(2\alpha)!}B_{2\alpha}(x_j)\right), \]
where $B_{2\alpha}$ is the Bernoulli polynomial of degree $2\alpha$.

Now the CBC algorithm for lattice rules proceeds as follows:
\begin{algorithm}[CBC for lattice rules]\label{alg:cbc_lat}
Let $s,N\in \NN$, $\alpha>1/2$ and $\bsgamma$ be given.
\begin{enumerate}
\item Let $z^*_1=1$ and $\ell=1$.
\item Compute $P_{\alpha,\bsgamma,N}(z^*_1,\ldots,z^*_\ell, z_{\ell+1})$ for all $z_{\ell+1}\in \{1,\ldots,N-1\}$ and let
\[ z^*_{\ell+1}= \arg\min_{z_{\ell+1}}P_{\alpha,\bsgamma,N}(z^*_1,\ldots,z^*_\ell, z_{\ell+1}). \]
\item If $\ell+1<s$, let $\ell=\ell+1$ and go to Step~2.
\end{enumerate}
\end{algorithm}

\begin{remark}\label{rem:cbc}
For special types of weights $\bsgamma$, the necessary computational cost for the CBC algorithm can be made small by using the fast Fourier transform \cite{NC06}.
In the case of product weights, i.e.,
\[ \gamma_u = \prod_{j\in u}\gamma_j\]
for $\gamma_1,\gamma_2,\ldots\in \RR_{\geq 0}$, the set of non-negative real numbers, we only need $O(sN\log N)$ arithmetic operations with $O(N)$ memory. In the case of POD weights, i.e.,
\[ \gamma_u = \Gamma_{|u|}\prod_{j\in u}\gamma_j\]
for $\gamma_1,\gamma_2,\ldots\in \RR_{\geq 0}$ and $\Gamma_1,\Gamma_2,\ldots\in \RR_{\geq 0}$, we need $O(sN\log N+s^2N)$ arithmetic operations with $O(sN)$ memory.
\end{remark}

As shown, for instance, in \cite[Theorem~5.12]{DKS13}, the squared worst-case error for lattice rules constructed by the CBC algorithm can be bounded as follows.
\begin{proposition}\label{prop:cbc_lat}
Let $s,N\in \NN$, $\alpha>1/2$ and $\bsgamma$ be given. The generating vector $\bsz$ constructed by Algorithm~\ref{alg:cbc_lat} satisfies
\[  P_{\alpha,\bsgamma,N}(\bsz) \leq \left(\frac{1}{\varphi(N)} \sum_{\emptyset\neq u \subseteq \{1, \ldots, s\}}\gamma^{\lambda}_u(2\zeta(2\alpha\lambda))^{|u|}\right)^{1/\lambda},\]
for any $1/(2\alpha)<\lambda\leq 1$, where $\zeta$ denotes the Riemann zeta function and $\varphi$ denotes the Euler totient function defined by
\[ \varphi(N) = \sum_{\substack{1\leq n\leq N\\ \mathrm{gcd}(n,N)=1}}1.\]
\end{proposition}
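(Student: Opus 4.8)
\noindent\textit{Proof sketch.}
The plan is to combine the standard averaging argument for the CBC construction with one convexity (Jensen-type) step, averaging over the \emph{reduced} residues modulo $N$ rather than over all of $\{1,\ldots,N-1\}$; this is what makes $\varphi(N)$ appear when $N$ is composite, and for prime $N$ it reduces to the familiar one-line estimate. Throughout we use the elementary inequality $(\sum_i a_i)^\lambda\le\sum_i a_i^\lambda$, valid for nonnegative $a_i$ and $0<\lambda\le 1$.

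First I would split the criterion by the largest index occurring in each coordinate subset: for $d=1,\ldots,s$ set
\[ \theta_d(\bsz):=\sum_{\substack{\emptyset\neq u\subseteq\{1,\ldots,d\}\\ \max u=d}}\ \sum_{\bsk_u\in P^{\perp}_u(\bsz)} r_\alpha(\bsgamma,\bsk_u), \]
which depends on $\bsz$ only through $z_1,\ldots,z_d$ and satisfies $P_{\alpha,\bsgamma,N}(\bsz)=\sum_{d=1}^s\theta_d(\bsz)$. Since the first $d-1$ components are frozen at step $d$ of Algorithm~\ref{alg:cbc_lat}, the chosen $z^*_d$ minimizes $z_d\mapsto\theta_d(z^*_1,\ldots,z^*_{d-1},z_d)$ over $\{1,\ldots,N-1\}$; for $d=1$ one checks directly that $z^*_1=1$ is a minimizer, since $\theta_1(z_1)=\gamma_{\{1\}}\,2\zeta(2\alpha)\,(\gcd(z_1,N)/N)^{2\alpha}$. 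Writing $\bsz$ for the output of the algorithm, the displayed elementary inequality gives $P_{\alpha,\bsgamma,N}(\bsz)^\lambda\le\sum_{d=1}^s\theta_d(\bsz)^\lambda$, so it suffices to bound each $\theta_d(\bsz)^\lambda$.

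Fix $d$ and put $S=\{z\in\{1,\ldots,N-1\}:\gcd(z,N)=1\}$, so $|S|=\varphi(N)$ and $S\subseteq\{1,\ldots,N-1\}$. Because $z^*_d$ minimizes $\theta_d$ over the larger set and $t\mapsto t^\lambda$ is increasing, $\theta_d(\bsz)^\lambda\le\frac1{\varphi(N)}\sum_{z\in S}\theta_d(z^*_1,\ldots,z^*_{d-1},z)^\lambda$. Applying the elementary inequality to the finite nonnegative double sum defining $\theta_d$ bounds $\theta_d(\cdot)^\lambda$ by the analogous sum with $r_\alpha(\bsgamma,\bsk_u)$ replaced by $r_\alpha(\bsgamma,\bsk_u)^\lambda=\gamma_u^\lambda\prod_{j\in u}|k_j|^{-2\alpha\lambda}$; since $\alpha\lambda>1/2$, the one-dimensional sums $\sum_{k\in\ZZ\setminus\{0\}}|k|^{-2\alpha\lambda}=2\zeta(2\alpha\lambda)$ converge. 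Pulling out the factor coming from the coordinates of $\bsk$ indexed by $u\setminus\{d\}$, namely $(2\zeta(2\alpha\lambda))^{|u|-1}$, and interchanging the sum over $z\in S$ with the remaining sum over $k_d$, everything reduces to the counting lemma: for every $c\in\ZZ$ and every $\beta>1/2$,
\[ \sum_{k\in\ZZ\setminus\{0\}}\frac1{|k|^{2\beta}}\,\#\{z\in S:kz\equiv c\pmod N\}\ \le\ 2\zeta(2\beta). \]
Granting this, $\theta_d(\bsz)^\lambda\le\frac1{\varphi(N)}\sum_{v\subseteq\{1,\ldots,d-1\}}\gamma^\lambda_{v\cup\{d\}}(2\zeta(2\alpha\lambda))^{|v|+1}$, which is $\frac1{\varphi(N)}$ times the part of the target sum over subsets whose largest element is $d$. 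Summing over $d=1,\ldots,s$ — each nonempty $u$ occurring exactly once, at $d=\max u$ — and raising to the power $1/\lambda$ finishes the proof.

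The real content is the counting lemma, and this is the step I expect to be the main obstacle, owing to the composite-$N$ bookkeeping. For fixed $z\in S$ the congruence $kz\equiv c\pmod N$ forces $k\equiv cz^{-1}\pmod N$, so the left-hand side equals $\sum_{z\in S}\sum_{k\equiv cz^{-1}(N),\,k\neq 0}|k|^{-2\beta}$. Let $g=\gcd(c,N)$. Every admissible $k$ is divisible by $g$, and as $z$ runs over $S$ the residue $cz^{-1}\bmod N$ runs over all residues having gcd $g$ with $N$, each attained exactly $\varphi(N)/\varphi(N/g)$ times; factoring out $g^{-2\beta}$ and bounding the resulting sum of $|k'|^{-2\beta}$ over a complete system of unit residue classes modulo $N/g$ by $\sum_{k'\neq 0}|k'|^{-2\beta}=2\zeta(2\beta)$, the left-hand side is at most $\frac{\varphi(N)}{\varphi(N/g)}g^{-2\beta}\,2\zeta(2\beta)$. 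The elementary bound $\varphi(N)\le g\,\varphi(N/g)\le g^{2\beta}\varphi(N/g)$ (the last step using $2\beta>1$) then yields exactly $2\zeta(2\beta)$; for prime $N$ this collapses to the usual estimate. Finally, the stability in the smoothness parameter — the freedom to take any $\lambda\in(1/(2\alpha),1]$ — is exactly the convexity step above and needs nothing beyond $\alpha\lambda>1/2$, consistent with the remark that this part follows from \cite[Chapter~5]{Niedbook}.
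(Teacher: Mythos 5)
Your proof is correct and follows essentially the same averaging argument that underlies the cited result \cite[Theorem~5.12]{DKS13} (the paper itself does not reprove it). The decomposition $P_{\alpha,\bsgamma,N}=\sum_{d}\theta_d$ by largest index, the two applications of $(\sum a_i)^\lambda\le\sum a_i^\lambda$, the averaging over reduced residues together with $\min\le\mathrm{avg}$, and the verification that $z_1^*=1$ is optimal for the base case are all exactly the ingredients of the standard CBC error bound. Your \emph{counting lemma} is a clean repackaging of the composite-$N$ bookkeeping; the key steps — that $z\mapsto cz^{-1}\bmod N$ maps $S$ onto the residues with $\gcd(\cdot,N)=g$ with uniform multiplicity $\varphi(N)/\varphi(N/g)$, and that $\varphi(N)\le g\,\varphi(N/g)\le g^{2\beta}\varphi(N/g)$ because $2\beta>1$ — are correct, and the $c=0$ case is implicitly covered since $g=N$ gives $\varphi(N)\le N\le N^{2\beta}$. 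One small inaccuracy in wording: you call the double sum defining $\theta_d$ ``finite,'' but the sum over $\bsk_u$ is infinite; this does not affect validity, since $(\sum a_i)^\lambda\le\sum a_i^\lambda$ holds for arbitrary nonnegative series by passing to the limit of partial sums, and both sides converge because $\alpha>1/2$ and $\alpha\lambda>1/2$.
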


\begin{remark}\label{rem:euler_totient}
Regarding the Euler totient function, a classical work by Rosser and Schoenfeld \cite[Theorem~15]{RS62} shows that we have
\[ \frac{1}{\varphi(N)} < \frac{1}{N}\left(e^c \log\log N+\frac{2.50637}{\log\log N}\right), \]
for any $N\geq 3$, with $c=0.577\ldots$ denoting the Euler's constant. This implies that, for arbitrarily small $\varepsilon>0$, there exists a constant $A_{\varepsilon}$ such that
\[ \frac{1}{\varphi(N)} \leq \frac{A_{\varepsilon}}{N^{1-\varepsilon}} \]
holds where $A_{\varepsilon}\to \infty$ as $\varepsilon\to 0$. Therefore, noting that $P_{\alpha,\bsgamma,N}(\bsz)$ represents the \emph{squared} worst-case error, the lattice rules constructed by Algorithm~\ref{alg:cbc_lat} achieve a convergence rate of the worst-case error arbitrarily close to $O(N^{-\alpha})$. Since we know from \cite{SW01} that it is not possible to achieve a convergence rate better than $O(N^{-\alpha})$, the result in Proposition~\ref{prop:cbc_lat} is almost optimal.
\end{remark}

\begin{remark}\label{rem:jensen}
For any $0<\delta\leq 1$, let us write $\bsgamma^{1/\delta}=(\gamma_u^{1/\delta})_{u\subset \NN}$. Then it follows from Jensen's inequality that
\begin{align*}
\left( P_{\alpha/\delta,\bsgamma^{1/\delta},N}(\bsz)\right)^{\delta} & = \left( \sum_{\emptyset\neq u \subseteq \{1, \ldots, s\}}\sum_{\bsk_u\in P^{\perp}_u(\bsz)}r_{\alpha/\delta}(\bsgamma^{1/\delta}, \bsk_u) \right)^{\delta} \\
& \leq \sum_{\emptyset\neq u \subseteq \{1, \ldots, s\}}\sum_{\bsk_u\in P^{\perp}_u(\bsz)}\left(r_{\alpha/\delta}(\bsgamma^{1/\delta}, \bsk_u)\right)^{\delta} \\
& = \sum_{\emptyset\neq u \subseteq \{1, \ldots, s\}}\sum_{\bsk_u\in P^{\perp}_u(\bsz)}r_{\alpha}(\bsgamma, \bsk_u) = P_{\alpha,\bsgamma,N}(\bsz).
\end{align*}
Thus, the generating vector $\bsz$ constructed by Algorithm~\ref{alg:cbc_lat} based on the criterion $P_{\alpha,\bsgamma,N}(\bsz)$ also works for weighted Korobov spaces with special types of smoothness and weights, i.e.,
\[ \alpha'=\frac{\alpha}{\delta}, \quad \bsgamma'=\bsgamma^{1/\delta}, \]
for any $0<\delta\leq 1$. In the next subsection, we prove a more general stability result than what we obtain simply from Jensen's inequality.
\end{remark}

\subsection{Stability result}

First we note that $P_{\alpha,\bsgamma,N}(\bsz)$ is bounded below by another quality criterion, the Zaremba index or also called figure of merit (\cite[Chapter~5]{Niedbook}, \cite[Chapter~4]{SJbook})
\[ \rho_{\alpha,\bsgamma, N}(\bsz) := \max_{\emptyset\neq u \subseteq \{1, \ldots, s\}}\max_{\bsk_u \in P_u^\perp(\bsz)} r_{\alpha}(\bsgamma, \bsk_u).\] This criterion turns out to be very useful in our context (it has for instance recently also been used in \cite{KKNU19}). 

As one of the main results of this paper, we prove the following upper bound on the squared worst-case error (cf. \cite[Theorem~5.34]{Niedbook}, \cite{NS90,SK83}).
\begin{theorem}\label{thm:lat}
Let $s,N\in \NN$ and $\bsz\in \{1,\ldots,N-1\}^s$. For any $\alpha,\alpha'> 1/2$ and $\bsgamma,\bsgamma'\in \RR_{\geq 0}^{\NN}$ such that $\gamma_v\geq \gamma_u$ whenever $v\subset u$, we have
\[ P_{\alpha',\bsgamma',N}(\bsz) \leq c_{\alpha'}(\rho_{\alpha,\bsgamma, N}(\bsz))^{\alpha'/\alpha}\sum_{\emptyset \neq u \subseteq \{1, \ldots, s\}}\frac{\gamma'_u}{\gamma_u^{\alpha'/\alpha}}\left( \frac{2^{2\alpha'+1}}{2^{2\alpha'-1}-1}\right)^{|u|}(\log_2 N)^{|u|-1}, \]
with 
\[ c_{\alpha'} = (1+\zeta(2\alpha'))+(2^{2\alpha'}+\zeta(2\alpha'))\frac{2^{2\alpha'-1}-1}{2^{4\alpha'}}. \]
\end{theorem}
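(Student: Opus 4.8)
The plan is to control each term $\sum_{\bsk_u \in P_u^\perp(\bsz)} r_{\alpha'}(\bsgamma',\bsk_u)$ in $P_{\alpha',\bsgamma',N}(\bsz)$ by comparing it to the figure of merit $\rho_{\alpha,\bsgamma,N}(\bsz)$ associated with the original parameters. The key elementary observation is that for any $\bsk_u \in P_u^\perp(\bsz)$ we have $r_\alpha(\bsgamma,\bsk_u) \le \rho_{\alpha,\bsgamma,N}(\bsz)$, which rearranges to $\gamma_u\prod_{j\in u}|k_j|^{-2\alpha} \le \rho_{\alpha,\bsgamma,N}(\bsz)$, hence $\prod_{j\in u}|k_j|^{-2\alpha'} \le (\rho_{\alpha,\bsgamma,N}(\bsz)/\gamma_u)^{\alpha'/\alpha}$. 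Therefore
\[
r_{\alpha'}(\bsgamma',\bsk_u) = \gamma'_u\prod_{j\in u}\frac{1}{|k_j|^{2\alpha'}} \le \gamma'_u \left(\frac{\rho_{\alpha,\bsgamma,N}(\bsz)}{\gamma_u}\right)^{\alpha'/\alpha}\prod_{j\in u}\frac{1}{|k_j|^{\varepsilon}}
\]
for a suitable split $2\alpha' = (2\alpha)(\alpha'/\alpha)$ — more precisely, I would write $|k_j|^{-2\alpha'} = \big(|k_j|^{-2\alpha}\big)^{\alpha'/\alpha}$ but keep only a fraction of that exponent ``outside'' (to extract the $\rho$ factor using the one index $j_0\in u$ realising the maximum, or more cleanly, distributing the bound over all of $u$) and retain enough decay ``inside'' to keep the resulting sum over $\bsk_u$ convergent. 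The cleanest route is to use the bound on the maximising coordinate only: pick $j_0 \in u$ with $|k_{j_0}|$ largest; then $\prod_{j\in u}|k_j|^{-2\alpha} \ge |k_{j_0}|^{-2\alpha}$, so $|k_{j_0}| \ge (\gamma_u/\rho_{\alpha,\bsgamma,N}(\bsz))^{1/(2\alpha)}$, and one bounds the sum by splitting according to which coordinate is the maximising one.

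The second ingredient is a counting/summation estimate over the dual lattice. For a one-dimensional sum, $\sum_{k \ge K} k^{-2\alpha'}$ is controlled by an integral comparison, giving something like $\frac{2\alpha'}{2\alpha'-1}K^{-(2\alpha'-1)}$ or, in the dyadic bookkeeping the paper clearly prefers, a geometric series in $2^{-(2\alpha'-1)}$ — this is where the factor $\big(2^{2\alpha'-1}-1\big)^{-1}$ and the constant $c_{\alpha'}$ come from. For the $|u|$-dimensional sum one iterates: for each fixed choice of which coordinate attains the max, the remaining $|u|-1$ coordinates each range freely over $\ZZ\setminus\{0\}$ with only the constraint coming from membership in $P_u^\perp(\bsz)$, and the crucial structural fact is that the dual lattice restricted to fixing all but one coordinate has at most $\lfloor\log_2 N\rfloor$-many relevant dyadic blocks contributing — this is the origin of the $(\log_2 N)^{|u|-1}$ factor. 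I expect to invoke essentially the classical estimate behind \cite[Theorem~5.34]{Niedbook} here: bounding $\sum_{\bsk_u \in P_u^\perp(\bsz)} \prod_{j\in u} |k_j|^{-\beta}$ for $\beta>1$ by $C_\beta^{|u|}(\log_2 N)^{|u|-1}\rho^{\text{(something)}}$.

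Putting the pieces together: for each nonempty $u$, combine the pointwise bound (which pulls out $\gamma'_u \gamma_u^{-\alpha'/\alpha}\rho_{\alpha,\bsgamma,N}(\bsz)^{\alpha'/\alpha}$) with the summation estimate (which contributes $\big(2^{2\alpha'+1}/(2^{2\alpha'-1}-1)\big)^{|u|}(\log_2 N)^{|u|-1}$ and absorbs the remaining decay), then sum over $u$. The monotonicity hypothesis $\gamma_v \ge \gamma_u$ for $v\subset u$ is used to ensure that when we restrict to a subset of coordinates (e.g.\ projecting onto the coordinate realising the max, or more generally handling lower-dimensional projections of the dual lattice) the denominator $\gamma_u^{\alpha'/\alpha}$ does not blow up relative to the genuinely $|u|$-dimensional contribution — equivalently, it guarantees the figure of merit is attained coherently across the subset lattice. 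The main obstacle I anticipate is the careful dyadic decomposition of $P_u^\perp(\bsz)$ that yields exactly the $(\log_2 N)^{|u|-1}$ power with the stated constant: one must argue that within each of the $|u|$ choices of ``max coordinate'', the number of dyadic scales for the non-max coordinates is bounded by $\log_2 N$ because any $k_j$ with $|k_j| \ge N$ can be reduced modulo the lattice structure, and that the geometric-series tails assemble into $c_{\alpha'}$ without loss. Tracking the constants so that the additive $(1+\zeta(2\alpha'))$ piece (from the ``all coordinates small'' regime) and the multiplicative $(2^{2\alpha'}+\zeta(2\alpha'))\frac{2^{2\alpha'-1}-1}{2^{4\alpha'}}$ piece (from the ``some coordinate large'' regime) combine into the stated $c_{\alpha'}$ is the most delicate bookkeeping, but it is routine once the decomposition is fixed.
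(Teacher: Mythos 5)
Your instinct to control each inner dual-lattice sum via \cite[Theorem~5.34]{Niedbook} is exactly the paper's route, and you correctly identify that the $(\log_2 N)^{|u|-1}$ factor and $c_{\alpha'}$ emerge from the dyadic/geometric-series bookkeeping behind that theorem, and that the monotonicity hypothesis enters when one passes to the zero-allowing projection of the dual lattice. However, the central extraction of $(\rho_{\alpha,\bsgamma,N}(\bsz)/\gamma_u)^{\alpha'/\alpha}$ does not work as you sketch, and your ``cleanest route'' contains a sign error that is fatal. You write: pick $j_0\in u$ with $|k_{j_0}|$ largest, then $\prod_{j\in u}|k_j|^{-2\alpha}\ge |k_{j_0}|^{-2\alpha}$, hence $|k_{j_0}|\ge(\gamma_u/\rho)^{1/(2\alpha)}$. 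This inequality is backwards: since every $|k_j|\ge 1$, we have $\prod_{j\in u}|k_j|\ge |k_{j_0}|$, so $\prod_{j\in u}|k_j|^{-2\alpha}\le |k_{j_0}|^{-2\alpha}$, and the membership constraint $\gamma_u\prod_{j\in u}|k_j|^{-2\alpha}\le\rho_{\alpha,\bsgamma,N}(\bsz)$ gives no lower bound on the single maximising coordinate. (Repairing it by using $|k_j|\le|k_{j_0}|$ for every $j\in u$ only yields $|k_{j_0}|\ge(\gamma_u/\rho)^{1/(2\alpha|u|)}$, which would produce the wrong exponent on $\rho/\gamma_u$.)

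What the constraint actually gives is a lower bound on the \emph{product}: $\prod_{j\in u}|k_j|\ge(\gamma_u/\rho_{\alpha,\bsgamma,N}(\bsz))^{1/(2\alpha)}$ for every $\bsk_u\in P^\perp_u(\bsz)$. Equivalently, the Zaremba-type quantity $\phi_{u,0}(\bsz):=\min_{\bsk_u\in P^\perp_{u,0}(\bsz)}\prod_{j\in u}\max(1,|k_j|)$ satisfies $\phi_{u,0}(\bsz)\ge(\gamma_u/\rho_{\alpha,\bsgamma,N}(\bsz))^{1/(2\alpha)}$, and this is where the monotonicity hypothesis is used, to pass from $\phi_u$ (all $k_j\neq 0$) down to $\phi_{u,0}$ (some $k_j$ may vanish). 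Niederreiter's Theorem~5.34 is phrased exactly in terms of $\phi_{u,0}$: it bounds $\sum_{\bsk_u\in P^\perp_{u,0}(\bsz)}\prod_{j\in u}\max(1,|k_j|)^{-2\alpha'}$ by a constant times $(\mu_u+1)^{|u|-1}(\phi_{u,0}(\bsz))^{-2\alpha'}$, where $\mu_u$ is the largest integer with $2^{\mu_u}<\phi_{u,0}(\bsz)$. No exponent is split off coordinate by coordinate, nothing is kept ``outside''; all of the $|k_j|^{-2\alpha'}$ decay stays inside the sum, and the factor $(\rho/\gamma_u)^{\alpha'/\alpha}$ appears only at the very end when the lower bound on $\phi_{u,0}(\bsz)$ is substituted into $(\phi_{u,0}(\bsz))^{-2\alpha'}$. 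Likewise, the $\log_2 N$ factor does not come from ``reducing $k_j$ modulo the lattice'' but from the trivial upper bound $\phi_{u,0}(\bsz)\le N/2$ (\cite[Lemma~5.8]{Niedbook}), which forces $\mu_u+1\le\log_2 N$.
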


We defer the proof of this theorem to the end of this section. 

Theorem~\ref{thm:lat} implies that, if we can construct a lattice rule with small $\rho_{\alpha,\bsgamma,N}(\bsz)$ value for given $\alpha$ and $\bsgamma$, 
the same lattice rule also does work for weighted Korobov spaces with different smoothness and weights.
As mentioned in Remark~\ref{rem:jensen}, applying Jensen's inequality leads to a kind of stability result, but it works only for higher smoothness $\alpha'=\alpha/\delta\geq \alpha$ and restrictive form of the weights. 

Let $\bsz$ be the generating vector constructed by the CBC algorithm based on the criterion $P_{\alpha,\bsgamma,N}(\bsz)$ for given $\alpha$ and $\bsgamma$. 
Applying the result from Proposition~\ref{prop:cbc_lat}, we have
\begin{align}\label{eq:bound_cbc_lat}
 P_{\alpha',\bsgamma',N}(\bsz) & \leq c_{\alpha'}\left(\frac{1}{\varphi(N)} \sum_{\emptyset\neq u \subseteq \{1, \ldots, s\}}\gamma^{\lambda}_u(2\zeta(2\alpha\lambda))^{|u|}\right)^{\alpha'/(\alpha \lambda)} \notag \\
 & \quad \times \sum_{\emptyset \neq u \subseteq \{1, \ldots, s\}}\frac{\gamma'_u}{\gamma_u^{\alpha'/\alpha}}\left( \frac{2^{2\alpha'+1}}{2^{2\alpha'-1}-1}\right)^{|u|}(\log_2 N)^{|u|-1}, 
\end{align}
for any $1/(2\alpha)<\lambda\leq 1$. Recalling Remark~\ref{rem:euler_totient}, the bound shown in \eqref{eq:bound_cbc_lat} implies that the lattice rules constructed by Algorithm~\ref{alg:cbc_lat} with $\alpha$ and $\bsgamma$ also achieves a convergence rate of the worst-case error arbitrarily close to $O(N^{-\alpha'})$ for the weighted Korobov space with different parameters $\alpha'$ and $\bsgamma'$, which is almost optimal. Moreover, we can show under some conditions on the weights $\bsgamma$ and $\bsgamma'$ that the worst-case error $P_{\alpha',\bsgamma',N}$ depends only polynomially on the dimension $s$, or even, that the bound is independent of the dimension.

\begin{corollary}\label{cor:lat}
Let $s,N\in \NN$, $\alpha,\alpha'> 1/2$ and $\bsgamma,\bsgamma'\in \RR_{\geq 0}^{\NN}$ such that $\gamma_v\geq \gamma_u$ whenever $v\subset u$. Let $\bsz\in \{1,\ldots,N-1\}^s$ be constructed by Algorithm~\ref{alg:cbc_lat} based on the criterion $P_{\alpha,\bsgamma,N}$. Then the following holds true:
\begin{enumerate}
\item For general weights $\bsgamma$ and $\bsgamma'$, assume that there exist $\lambda, \delta,q,q'\geq 0$ such that $1/(2\alpha)<\lambda<1$, $0 < \delta<\alpha'/(\alpha\lambda)$, 
\[ \sup_{s\in \NN}\frac{1}{s^q}\sum_{\emptyset\neq u \subseteq \{1, \ldots, s\}}\gamma^{\lambda}_u(2\zeta(2\alpha\lambda))^{|u|}<\infty, \]
and
\[ \sup_{s,N\in \NN}\frac{1}{s^{q'} (\varphi(N))^{\delta}} \sum_{\emptyset \neq u \subseteq \{1, \ldots, s\}}\frac{\gamma'_u}{\gamma_u^{\alpha'/\alpha}}\left( \frac{2^{2\alpha'+1}}{2^{2\alpha'-1}-1}\right)^{|u|}(\log_2 N)^{|u|-1} <\infty. \]
Then the worst-case error $P_{\alpha',\bsgamma',N}(\bsz)$ depends only polynomially on $s$ and is bounded by
\[P_{\alpha',\bsgamma',N}(\bsz) \le C_\delta s^{q\alpha'/(\alpha\lambda)+q'} (\varphi(N))^{-\alpha'/(\alpha \lambda)+\delta}, \]
for some constant $C_\delta >0$ which is independent of $s$ and $N$. If the above conditions hold for $q=q'=0$, the worst-case error $P_{\alpha',\bsgamma',N}(\bsz)$ is bounded independently of $s$.
\item In particular, in the case of product weights $\bsgamma$ and $\bsgamma'$, assume that there exists $\lambda\in (1/(2\alpha),1]$ such that
\[ \sum_{j=1}^{\infty}\gamma_j^{\lambda}<\infty\quad \text{and}\quad \sum_{j=1}^{\infty}\frac{\gamma'_j}{\gamma_j^{\alpha'/\alpha}}<\infty. \]
Then the worst-case error $P_{\alpha',\bsgamma',N}(\bsz)$ is independent of $s$ and bounded by 
\[P_{\alpha',\bsgamma',N}(\bsz) \le  C_\delta (\varphi(N))^{-\alpha'/(\alpha \lambda)+\delta}, \]
for arbitrarily small $\delta>0$.
\end{enumerate}
\end{corollary}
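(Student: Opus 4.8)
\textbf{Proof proposal for Corollary~\ref{cor:lat}.}
The plan is to read off both assertions directly from the bound~\eqref{eq:bound_cbc_lat}, which already combines Theorem~\ref{thm:lat} with Proposition~\ref{prop:cbc_lat}, holds for the generating vector $\bsz$ produced by Algorithm~\ref{alg:cbc_lat}, and is valid for every $\lambda\in(1/(2\alpha),1]$. Abbreviate
\[ S_1 := \sum_{\emptyset\neq u \subseteq \{1,\ldots,s\}}\gamma_u^{\lambda}\,(2\zeta(2\alpha\lambda))^{|u|}, \qquad S_2 := \sum_{\emptyset\neq u \subseteq \{1,\ldots,s\}}\frac{\gamma'_u}{\gamma_u^{\alpha'/\alpha}}\left(\frac{2^{2\alpha'+1}}{2^{2\alpha'-1}-1}\right)^{|u|}(\log_2 N)^{|u|-1}, \]
so that~\eqref{eq:bound_cbc_lat} reads $P_{\alpha',\bsgamma',N}(\bsz)\le c_{\alpha'}\,(S_1/\varphi(N))^{\alpha'/(\alpha\lambda)}\,S_2$. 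Everything then comes down to estimating $S_1$ and $S_2$.

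Part 1 requires no real work: the first summability hypothesis is exactly $S_1\le C_1 s^{q}$ and the second is exactly $S_2\le C_2 s^{q'}(\varphi(N))^{\delta}$, with $C_1,C_2$ independent of $s$ and $N$ (and $C_2$ depending on $\delta$). Substituting into~\eqref{eq:bound_cbc_lat} gives
\[ P_{\alpha',\bsgamma',N}(\bsz)\le c_{\alpha'}\left(\frac{C_1 s^{q}}{\varphi(N)}\right)^{\alpha'/(\alpha\lambda)}C_2 s^{q'}(\varphi(N))^{\delta} = C_\delta\,s^{q\alpha'/(\alpha\lambda)+q'}\,(\varphi(N))^{-\alpha'/(\alpha\lambda)+\delta} \]
with $C_\delta=c_{\alpha'}C_1^{\alpha'/(\alpha\lambda)}C_2$; the exponent of $\varphi(N)$ is negative since $\delta<\alpha'/(\alpha\lambda)$, and for $q=q'=0$ the bound is free of $s$.

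For part 2 I would verify, for product weights and any (arbitrarily small) $\delta>0$, that $S_1\le C_1$ and $S_2\le C_\delta(\varphi(N))^{\delta}$ with $s$- and $N$-independent constants, and then substitute into~\eqref{eq:bound_cbc_lat} exactly as above. For $S_1$ one uses $\sum_{\emptyset\neq u \subseteq\{1,\ldots,s\}}\prod_{j\in u}a_j=\prod_{j=1}^{s}(1+a_j)-1$ with $a_j=2\zeta(2\alpha\lambda)\gamma_j^{\lambda}$ together with $1+x\le e^{x}$, which gives $S_1\le\exp\bigl(2\zeta(2\alpha\lambda)\sum_{j=1}^{\infty}\gamma_j^{\lambda}\bigr)<\infty$, uniformly in $s$. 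For $S_2$, put $\beta_j:=\gamma'_j/\gamma_j^{\alpha'/\alpha}$ and $C:=2^{2\alpha'+1}/(2^{2\alpha'-1}-1)$; pulling out one factor of $\log_2 N$ and applying the same identity gives, for $N\ge2$,
\[ S_2=\frac{1}{\log_2 N}\left(\prod_{j=1}^{s}\bigl(1+\beta_j C\log_2 N\bigr)-1\right)\le\prod_{j=1}^{s}\bigl(1+\beta_j C\log_2 N\bigr). \]
The crude estimate $\prod_{j=1}^{s}(1+\beta_j C\log_2 N)\le\exp\bigl(C\log_2 N\sum_j\beta_j\bigr)=N^{C\sum_j\beta_j/\ln 2}$ produces only a \emph{fixed} positive power of $N$, and defeating this is the one genuinely non-routine step. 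The remedy is a head/tail split: given $\delta>0$, choose $J=J(\delta)$ so large that $C\bigl(\sum_{j>J}\beta_j\bigr)/\ln 2\le\delta/4$ (possible since $\sum_j\beta_j<\infty$); then the tail satisfies $\prod_{j>J}(1+\beta_j C\log_2 N)\le N^{\delta/4}$, while the head $\prod_{j\le J}(1+\beta_j C\log_2 N)$ is a polynomial in $\log_2 N$ of degree at most $J$ and is therefore $\le C_{J,\delta}(\varphi(N))^{\delta/2}$, because $\varphi(N)$ exceeds a fixed power of $N$ and hence grows faster than any power of $\log N$ (Remark~\ref{rem:euler_totient}). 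Finally $N^{\delta/4}\le(C')^{\delta/2}(\varphi(N))^{\delta/2}$, again by Remark~\ref{rem:euler_totient} (e.g.\ $N^{1/2}\le C'\varphi(N)$ for all $N\ge2$). Multiplying the three factors gives $S_2\le C_\delta(\varphi(N))^{\delta}$, uniformly in $s$ and $N$, and substituting $S_1\le C_1$ and this estimate into~\eqref{eq:bound_cbc_lat} yields $P_{\alpha',\bsgamma',N}(\bsz)\le C_\delta(\varphi(N))^{-\alpha'/(\alpha\lambda)+\delta}$, as claimed.

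The only real obstacle is thus the family of factors $(\log_2 N)^{|u|-1}$ appearing in $S_2$ with $|u|$ as large as $s$: treated naively they contribute a fixed power of $N$ that would wreck the convergence rate, so the finite-head/small-tail decomposition is essential. Everything else---the substitution into~\eqref{eq:bound_cbc_lat}, the product formula for $S_1$, and passing between powers of $N$ and of $\varphi(N)$---is routine given Proposition~\ref{prop:cbc_lat}, Theorem~\ref{thm:lat}, and Remark~\ref{rem:euler_totient}.
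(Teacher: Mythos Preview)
Your proposal is correct and follows essentially the same route as the paper: Part~1 is read off directly from \eqref{eq:bound_cbc_lat}, exactly as the paper states, and for Part~2 your product expansion for $S_1$ together with the head/tail split for $S_2$ is precisely the content of the references the paper invokes (\cite[Theorem~4]{Kuo03} and \cite[Lemma~3]{HN03}), here written out in full rather than cited.
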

\begin{proof}
The result for the first item immediately follows from the bound \eqref{eq:bound_cbc_lat}. The second item can be proven by combining arguments used in \cite[Theorem~4]{Kuo03} and \cite[Lemma~3]{HN03}.
\end{proof}

One of the most important indications from the first item of Corollary~\ref{cor:lat} is that, even for general weights $\bsgamma'$, by choosing $\bsgamma$, such that a fast component-by-component construction is possible (for instance for product weights or POD weights) and the conditions given in Item~1 of Corollary~\ref{cor:lat} are satisfied, then
we can construct a good lattice rule which achieves the almost optimal rate of convergence in $H^{\Kor}_{\alpha',\bsgamma'}$ with good tractability properties. As far as the authors know, such a constructive result for general weights has not been known in the literature. 

To illustrate Corollary~\ref{cor:lat}, we provide some examples of $\alpha,\alpha'$ and $\bsgamma,\bsgamma'$ below, which satisfy the summability conditions in Corollary~\ref{cor:lat}.

\begin{example}(product weights for $\bsgamma$ and general weights for $\bsgamma'$)
Let $\alpha,\alpha'> 1/2$ be arbitrarily given, and $\bsgamma$ be the product weights with $\gamma_j=j^{-2\alpha}$. Then the first summability condition given in Item~1 of Corollary~\ref{cor:lat} is satisfied for any $\lambda\in (1/(2\alpha),1]$ and $q=0$. Now let $\bsgamma'$ be given by
\[ \gamma'_u = \Gamma_{|u|}\left(\frac{1}{|u|}\sum_{j\in u}j^{2\alpha'|u|}\right)^{-1-\varepsilon}, \]
for some $\varepsilon>1/(2\alpha')$ and a sequence $\Gamma_1,\Gamma_2,\ldots\geq 0$. Using an elementary inequality
\[ \frac{1}{|u|}\sum_{j\in u}j^{2\alpha'|u|} \geq \prod_{j\in u}j^{2\alpha'},\]
we have
\[ \frac{\gamma'_u}{\gamma_u^{\alpha'/\alpha}}\leq \Gamma_{|u|}\prod_{j\in u}j^{-2\alpha'\varepsilon}. \]
Let $\Gamma_{|u|}=O(|u|^{\tau})$ or $\Gamma_{|u|}=O(\exp(\tau|u|))$ for some $\tau\geq 0$.
Since we assume $\varepsilon>1/(2\alpha')$, which implies that $\sum_{j=1}^{\infty}j^{-2\alpha'\varepsilon}<\infty$, the argument used in \cite[Lemma~3]{HN03} directly shows that the second summability condition given in Item~1 of Corollary~\ref{cor:lat} is satisfied for arbitrarily small $\delta>0$ and $q'=0$.
\end{example}
\begin{example}(Product weights for both $\bsgamma$ and $\bsgamma'$)
For arbitrarily given $\alpha,\alpha'> 1/2$ and product weights $\bsgamma, \bsgamma'$, let $\gamma_j=j^{-r}$ for some $r>1$. If there exists an arbitrarily small $\varepsilon>1$ such that $\gamma'_j=j^{-\alpha'r/\alpha-\varepsilon}$, then the conditions given in Item~2 of Corollary~\ref{cor:lat} are satisfied for any $\max(1/(2\alpha),1/r)<\lambda\leq 1$.
\end{example}
\begin{example}(POD weights for both $\bsgamma$ and $\bsgamma'$)
For arbitrarily given $\alpha,\alpha'> 1/2$, let 
\[ \gamma_u=\Gamma_{|u|} \prod_{j\in u}\gamma_j\quad \text{and}\quad \gamma'_u=\Gamma'_{|u|} \prod_{j\in u}\gamma'_j \]
for sequences $\gamma_1,\gamma_2,\ldots\geq 0$, $\Gamma_1,\Gamma_2,\ldots\geq 0$, $\gamma'_1,\gamma'_2,\ldots\geq 0$ and $\Gamma'_1,\Gamma'_2,\ldots\geq 0$. Following \cite[Lemma~6.3]{KSS12}, if there exist $1<p<2\alpha$ and $n\in \NN$ such that 
\[ \Gamma_{|u|}=((|u|+n)!)^p \quad \text{and}\quad \sum_{j=1}^{\infty}2\gamma_j^{1/p}\zeta\left(\frac{2\alpha}{p}\right)<1, \]
then the first summability condition given in Item~1 of Corollary~\ref{cor:lat} holds with $q=0$ and $\lambda=1/p$. Moreover, if there exists a constant $C>0$ such that
\[ \frac{\Gamma'_{|u|}}{\Gamma^{\alpha'/\alpha}_{|u|}}\leq C\]
for any non-empty $u\subset \NN$, the second summability condition holds with $q'=0$ and an arbitrarily small $\delta>0$ as long as
\[ \sum_{j=1}^{\infty}\frac{\gamma'_j}{\gamma^{\alpha'/\alpha}_j}<\infty. \]
\end{example}

In the proof of Theorem~\ref{thm:lat}, we shall use the following elementary inequality. We refer to \cite[Lemma~13.24]{DPbook} for the proof.
\begin{lemma}\label{lem:inequ1}
For any real $b>1$ and any $k,t_0\in \NN$, we have
\[ \sum_{t=t_0}^{\infty}b^{-t}\binom{t+k-1}{k-1} \leq b^{-t_0}\binom{t_0+k-1}{k-1}\left( 1-\frac{1}{b}\right)^{-k}. \]
\end{lemma}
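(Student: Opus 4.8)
The plan is to recast the inequality as a term-by-term comparison of power series. Set $x=1/b\in(0,1)$ and recall the binomial series identity $(1-x)^{-k}=\sum_{j=0}^{\infty}\binom{j+k-1}{k-1}x^{j}$, valid since $b>1$. Multiplying this by $\binom{t_0+k-1}{k-1}x^{t_0}$ and reindexing by $t=t_0+j$, the right-hand side of the claimed inequality becomes
\[ b^{-t_0}\binom{t_0+k-1}{k-1}\Bigl(1-\tfrac1b\Bigr)^{-k}=\binom{t_0+k-1}{k-1}\sum_{t=t_0}^{\infty}\binom{t-t_0+k-1}{k-1}b^{-t}. \]
Hence, since every term is nonnegative, it suffices to establish the pointwise bound
\[ \binom{t+k-1}{k-1}\le\binom{t_0+k-1}{k-1}\binom{t-t_0+k-1}{k-1}\qquad\text{for every }t\ge t_0. \]

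Writing $r=k-1$, $a=t_0$ and $c=t-t_0$, this is the purely combinatorial inequality $\binom{a+c+r}{r}\le\binom{a+r}{r}\binom{c+r}{r}$ for nonnegative integers $a,c,r$. I would prove it by expanding all three binomial coefficients as products, writing $\binom{a+c+r}{r}=\frac{1}{r!}\prod_{i=1}^{r}(a+c+i)$ and similarly for the other two, and then using the elementary factorwise inequality
\[ (a+i)(c+i)=ac+i(a+c)+i^{2}\ \ge\ i\,(a+c+i)\qquad(1\le i\le r), \]
which is nothing but the observation $ac\ge0$. Taking the product over $i=1,\dots,r$ gives $\prod_{i=1}^{r}(a+i)(c+i)\ge r!\prod_{i=1}^{r}(a+c+i)$, and dividing by $(r!)^{2}$ yields exactly $\binom{a+r}{r}\binom{c+r}{r}\ge\binom{a+c+r}{r}$, completing the argument.

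I do not expect a genuine obstacle: once the reformulation is in place the whole matter reduces to the one-line factorwise estimate above, and the rearrangement/convergence steps are harmless because all terms are nonnegative and $b>1$ forces absolute convergence. An alternative route, avoiding generating functions, is induction on $k$: the case $k=1$ is the geometric series (with equality), and the step uses the hockey-stick identity $\binom{t+k-1}{k-1}=\sum_{s=0}^{t}\binom{s+k-2}{k-2}$ to interchange the order of summation, applies the induction hypothesis to the tail $\sum_{s\ge t_0}b^{-s}\binom{s+k-2}{k-2}$, the hockey-stick identity again to the finite piece $\sum_{s=0}^{t_0-1}\binom{s+k-2}{k-2}=\binom{t_0+k-2}{k-1}$, and finally Pascal's rule together with $(1-1/b)^{-(k-1)}\ge1$ to recombine into $\binom{t_0+k-1}{k-1}$. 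Either way the proof is short; I would present the generating-function version for brevity.
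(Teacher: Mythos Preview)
Your argument is correct. The generating-function reformulation is clean, and the termwise inequality $\binom{a+c+r}{r}\le\binom{a+r}{r}\binom{c+r}{r}$ via the factorwise bound $(a+i)(c+i)\ge i(a+c+i)$ is valid for all nonnegative integers $a,c$ and $r\ge1$ (the case $r=0$, i.e.\ $k=1$, is trivial with equality). The convergence and nonnegativity remarks are in order, so the termwise comparison really does imply the series inequality.

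As for the comparison: the paper does not actually supply a proof of this lemma but simply cites \cite[Lemma~13.24]{DPbook}. Your write-up is therefore more self-contained than what the paper offers. The alternative inductive route you sketch (hockey-stick plus Pascal) also works, and is closer in spirit to how such tail bounds are often handled in the digital-nets literature; your primary generating-function argument is shorter and arguably more transparent because it isolates the purely combinatorial content in a single one-line estimate.
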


\begin{proof}[Proof of Theorem~\ref{thm:lat}]
Recalling the definition of $r_{\alpha'}$, we have
\[ P_{\alpha',\bsgamma',N}(\bsz) = \sum_{\emptyset\neq u \subseteq \{1, \ldots, s\}}\gamma'_u\sum_{\bsk_u\in P^{\perp}_u(\bsz)}\prod_{j\in u}\frac{1}{|k_j|^{2\alpha'}}. \]
Let us define $P^\perp_{u,0}(\bsz) = \{\bsk_u \in \ZZ^{|u|}\setminus \{\bszero\} \mid (\bsk_u,\bszero)\in P^{\perp}(\bsz)\}$, for which we have $P^\perp_u(\bsz)\subseteq P^\perp_{u,0}(\bsz)$.
Moreover, let
\[ \phi_{u,0}(\bsz) := \min_{\bsk_u\in P^\perp_{u,0}(\bsz)}\prod_{j\in u}\max\left(1,|k_j|\right) \]
and
\[ \phi_u(\bsz) := \min_{\bsk_u\in P^\perp_{u}(\bsz)}\prod_{j\in u}|k_j|. \]
Then it is straightforward to see that
\[  \rho_{\alpha,\bsgamma, N}(\bsz) =\max_{\emptyset \neq u \subseteq \{1, \ldots, s\}}\gamma_u\max_{\bsk_u \in P^\perp_u(\bsz)}\prod_{j\in u}\frac{1}{|k_j|^{2\alpha}} = \max_{\emptyset \neq u \subseteq \{1, \ldots, s\}}\frac{\gamma_u}{(\phi_u(\bsz))^{2\alpha}}. \]
Hence it holds for any non-empty subset $u\subseteq \{1,\ldots,s\}$ that
\[ \phi_u(\bsz)\geq \left( \frac{\gamma_u}{\rho_{\alpha,\bsgamma, N}(\bsz)}\right)^{1/(2\alpha)}. \]
Moreover, by assuming $\gamma_v\geq \gamma_u$ whenever $v\subset u$, we obtain a lower bound
\begin{align}\label{eq:lower_bound_mu0}
 \phi_{u,0}(\bsz) = \min_{\emptyset \neq v\subseteq u}\phi_v(\bsz)\geq \min_{\emptyset \neq v\subseteq u}\left( \frac{\gamma_v}{\rho_{\alpha,\bsgamma,N}(\bsz)}\right)^{1/(2\alpha)} = \left( \frac{\gamma_u}{\rho_{\alpha,\bsgamma, N}(\bsz)}\right)^{1/(2\alpha)}.
\end{align}

For a non-empty subset $u\subseteq \{1,\ldots,s\}$, we denote by $\mu_u$ the largest integer such that $2^{\mu_u} < \phi_{u,0}(\bsz)$ holds. It follows from the proof of \cite[Theorem~5.34]{Niedbook} that the inner sum on the expression of $P_{\alpha',\bsgamma',N}(\bsz)$ for a given $u$ with $|u|\geq 2$ is bounded above by
\begin{align*}
\sum_{\boldsymbol{k}_u \in P^\perp_u(\bsz)} \prod_{j\in u}\frac{1}{|k_j|^{2\alpha'}} 
& \leq \sum_{\boldsymbol{k}_u \in P^\perp_{u,0}(\bsz)} \prod_{j\in u}\frac{1}{\max(1,|k_j|)^{2\alpha'}} \\
& \leq \frac{2^{|u|}}{(\phi_{u,0}(\bsz))^{2\alpha'}}\Bigg[ (1+\zeta(2\alpha')) \binom{\mu_u+|u|-1}{|u|-1} \\ & \quad\quad\quad +(2^{2\alpha'}  +\zeta(2\alpha'))\sum_{k=1}^{\infty}2^{(1-2\alpha')k}\binom{k+\mu_u+|u|-2}{|u|-2} \Bigg].
\end{align*}
For the first term in the parenthesis, we have
\[ \binom{\mu_u+|u|-1}{|u|-1}=\prod_{i=1}^{|u|-1}\frac{\mu_u+i}{i}\leq (\mu_u+1)^{|u|-1}.\]
For the second term in the parenthesis, applying Lemma~\ref{lem:inequ1} with $t_0=\mu_u+1, k=|u|-1$ and $b=2^{2\alpha'-1}$ gives
\begin{align*}
\sum_{k=1}^{\infty}2^{(1-2\alpha')k}\binom{k+\mu_u+|u|-2}{|u|-2} & = 2^{(2\alpha'-1)\mu_u}\sum_{k=\mu_u+1}^{\infty}2^{-(2\alpha'-1)k}\binom{k+|u|-2}{|u|-2} \\
& \leq 2^{-(2\alpha'-1)}\binom{\mu_u+|u|-1}{|u|-2}\left( \frac{2^{2\alpha'-1}}{2^{2\alpha'-1}-1}\right)^{|u|-1} \\
& \leq \frac{2^{2\alpha'-1}-1}{2^{4\alpha'-2}}\left( \frac{2^{2\alpha'-1}}{2^{2\alpha'-1}-1}\right)^{|u|}\prod_{i=1}^{|u|-2}\frac{\mu_u+i+1}{i} \\
& \leq \frac{2^{2\alpha'-1}-1}{2^{4\alpha'-2}}\left( \frac{2^{2\alpha'-1}}{2^{2\alpha'-1}-1}\right)^{|u|}(\mu_u+2)^{|u|-2} \\
& \leq \frac{2^{2\alpha'-1}-1}{2^{4\alpha'}}\left( \frac{2^{2\alpha'}}{2^{2\alpha'-1}-1}\right)^{|u|}(\mu_u+1)^{|u|-2}.
\end{align*}

Using these bounds, we obtain
\begin{align*}
& \sum_{\boldsymbol{k}_u \in P^\perp_u(\bsz)} \prod_{j\in u}\frac{1}{|k_j|^{2\alpha'}}  \\
& \leq \frac{2^{|u|}}{(\phi_{u,0}(\bsz))^{2\alpha'}}\Bigg[ (1+\zeta(2\alpha')) (\mu_u+1)^{|u|-1} \\ & \quad \quad \quad +(2^{2\alpha'}+\zeta(2\alpha'))\frac{2^{2\alpha'-1}-1}{2^{4\alpha'}}\left( \frac{2^{2\alpha'}}{2^{2\alpha'-1}-1}\right)^{|u|}(\mu_u+1)^{|u|-2} \Bigg] \\
& \leq c_{\alpha'} \left( \frac{2^{2\alpha'+1}}{2^{2\alpha'-1}-1}\right)^{|u|}\frac{(\mu_u+1)^{|u|-1}}{(\phi_{u,0}(\bsz))^{2\alpha'}}.
\end{align*}
Note that this bound on the inner sum on the expression of $P_{\alpha,\bsgamma,N}(\bsz)$ also applies to the case $|u|=1$.

As shown in \eqref{eq:lower_bound_mu0}, $\phi_{u,0}(\bsz)$ has a lower bound. On the other hand, as proven in \cite[Lemma~5.8]{Niedbook}, $\phi_{u,0}(\bsz)$ also has a trivial upper bound, which is $\phi_{u,0}(\bsz)\leq N/2$. This bound directly means that $\mu_u\leq \log_2 N-1$. Therefore we can bound $P_{\alpha',\bsgamma',N}(\bsz)$ as
\begin{align*}
P_{\alpha',\bsgamma',N}(\bsz) & \leq c_{\alpha'}\sum_{\emptyset \neq u \subseteq \{1, \ldots, s\}}\gamma'_u \left( \frac{2^{2\alpha'+1}}{2^{2\alpha'-1}-1}\right)^{|u|}\frac{(\mu_u+1)^{|u|-1}}{(\phi_{u,0}(\bsz))^{2\alpha'}} \\
& \leq c_{\alpha'}(\rho_{\alpha,\bsgamma,N}(\bsz))^{\alpha'/\alpha}\sum_{\emptyset \neq u \subseteq \{1, \ldots, s\}}\frac{\gamma'_u}{\gamma_u^{\alpha'/\alpha}}\left( \frac{2^{2\alpha'+1}}{2^{2\alpha'-1}-1}\right)^{|u|}(\log_2 N)^{|u|-1}.
\end{align*}
This completes the proof.
\end{proof}

\subsection{Bound on the weighted star discrepancy}
Here we study tractability properties of the weighted star discrepancy for lattice rules constructed by the CBC algorithm based on the criterion $P_{\alpha,\bsgamma,N}$.

For an $N$-element point set $P\subset [0,1)^s$, the local discrepancy function is defined by
\[ \Delta_P(\bsy) := \frac{1}{N}\sum_{\bsx\in P}1_{[\bszero,\bsy)}(\bsx)-\lambda\left( [\bszero,\bsy)\right), \]
for $\bsy\in [0,1)^s$, where $[\bszero,\bsy)=[0,y_1)\times [0,y_2)\times \cdots \times [0,y_s)$ and $1_{[\bszero,\bsy)}$ denotes the characteristic function of the interval $[\bszero,\bsy)$.
For a non-empty subset $u\subseteq \{1,\ldots,s\}$, let us write $P_u=\{\bsx_u=(x_j)_{j\in u}\mid \bsx\in P\}$.
Then the weighted star discrepancy is defined by
\[ D^*_{\bsgamma,N}(P) = \max_{\emptyset \neq u\subseteq \{1,\ldots,s\}}\gamma_u \sup_{\bsy_u\in [0,1)^{|u|}}\left|\Delta_{P_u}(\bsy_u)\right| . \]
In what follows, we focus on lattice point sets and simply write $D^*_{\bsgamma,N}(\bsz)$ instead of $D^*_{\bsgamma,N}(P(\bsz))$.

As shown in \cite[Section~2]{Joe06}, the weighted star discrepancy for a lattice point set with generating vector $\bsz$ is bounded above by
\[ D^*_{\bsgamma,N}(\bsz) \leq \sum_{\emptyset \neq u \subseteq \{1, \ldots, s\}}\gamma_u \left[ 1-\left(1-\frac{1}{N}\right)^{|u|}+\frac{R_{u,N}(\bsz)}{2}\right], \]
where
\[ R_{u,N}(\bsz) = \sum_{\bsk\in P_{u,0}^\perp(\bsz) \cap C^*_{N,|u|}} \prod_{j\in u}\frac{1}{\max(1,|k_j|)},\]
with  
\[ C^*_{N,|u|} = \left\{ \bsk_u\in \ZZ^{|u|}\setminus \{\boldsymbol{0}\} \mid -\frac{N}{2}<k_j\leq \frac{N}{2}, \forall j\in u \right\}.  \]
Moreover, as proven in \cite[Theorem~5.35]{Niedbook}, we have
\[ R_{u,N}(\bsz) \leq \frac{1}{\phi_{u,0}(\bsz)}\left[ \log 2\, (\log_2 N)^{|u|}+3\, (2\log_2 N)^{|u|-1}\right], \]
for any non-empty $u \subseteq \{1, \ldots, s\}$.
By using the lower bound \eqref{eq:lower_bound_mu0} on $\phi_{u,0}(\bsz)$, the following result holds true.
\begin{theorem}
Let $s,N\in \NN$ and $\bsz\in \{1,\ldots,N-1\}^s$. For any $\alpha> 1/2$ and $\bsgamma,\bsgamma'\in \RR_{\geq 0}^{\NN}$ such that $\gamma_v\geq \gamma_u$ whenever $v\subset u$, we have
\begin{align*}
D^*_{\bsgamma',N}(\bsz) \leq & \sum_{\emptyset \neq u \subseteq \{1, \ldots, s\}}\gamma'_u \Bigg[ 1-\left(1-\frac{1}{N}\right)^{|u|}  \\ & \quad\quad +\frac{(\rho_{\alpha,\bsgamma,N}(\bsz))^{1/(2\alpha)}}{2\gamma_u^{1/(2\alpha)}}\left[ \log 2\, (\log_2 N)^{|u|}+3\, (2\log_2 N)^{|u|-1}\right]\Bigg]. 
\end{align*}
\end{theorem}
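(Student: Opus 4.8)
The plan is to assemble the result directly from the three ingredients already laid out in the excerpt, so the proof is essentially a chaining of known bounds. First I would recall the bound from \cite[Section~2]{Joe06},
\[
D^*_{\bsgamma',N}(\bsz) \leq \sum_{\emptyset \neq u \subseteq \{1, \ldots, s\}}\gamma'_u \left[ 1-\left(1-\frac{1}{N}\right)^{|u|}+\frac{R_{u,N}(\bsz)}{2}\right],
\]
which reduces the task to controlling $R_{u,N}(\bsz)$ term by term. Then I would insert the estimate from \cite[Theorem~5.35]{Niedbook},
\[
R_{u,N}(\bsz) \leq \frac{1}{\phi_{u,0}(\bsz)}\left[ \log 2\, (\log_2 N)^{|u|}+3\, (2\log_2 N)^{|u|-1}\right],
\]
so that everything now hinges on a lower bound for $\phi_{u,0}(\bsz)$.

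The second step is to invoke the lower bound \eqref{eq:lower_bound_mu0} established in the proof of Theorem~\ref{thm:lat}, namely
\[
\phi_{u,0}(\bsz)\geq \left( \frac{\gamma_u}{\rho_{\alpha,\bsgamma, N}(\bsz)}\right)^{1/(2\alpha)},
\]
valid precisely under the monotonicity hypothesis $\gamma_v\geq \gamma_u$ whenever $v\subset u$. Substituting $1/\phi_{u,0}(\bsz) \leq (\rho_{\alpha,\bsgamma,N}(\bsz))^{1/(2\alpha)}/\gamma_u^{1/(2\alpha)}$ into the bound on $R_{u,N}(\bsz)$, and then feeding that into the Joe bound, yields exactly the claimed inequality after collecting the factor $1/2$ with $R_{u,N}/2$. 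Note that the monotonicity is imposed on $\bsgamma$ (the weights used to build the rule), not on $\bsgamma'$, which is the only place the hypothesis is needed, as in Theorem~\ref{thm:lat}.

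There is genuinely no obstacle here: every inequality used is either stated verbatim in the excerpt or proved within the proof of Theorem~\ref{thm:lat}, and the argument is a direct three-line substitution. The only point requiring a word of care is making sure the lower bound \eqref{eq:lower_bound_mu0} is applicable with the same $\alpha$ and $\bsgamma$ appearing in $\rho_{\alpha,\bsgamma,N}(\bsz)$ — but this is automatic since $\phi_{u,0}(\bsz)$ depends only on the dual lattice $P^\perp(\bsz)$ and not on any smoothness or weight parameter, so the chain $\rho_{\alpha,\bsgamma,N}(\bsz) \mapsto \phi_u(\bsz) \mapsto \phi_{u,0}(\bsz)$ is exactly the one carried out there. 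Hence the proof is simply: apply Joe's bound, then the Niederreiter bound on $R_{u,N}$, then \eqref{eq:lower_bound_mu0}, and combine.

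\begin{proof}
By \cite[Section~2]{Joe06}, the weighted star discrepancy of the lattice point set with generating vector $\bsz$ satisfies
\[ D^*_{\bsgamma',N}(\bsz) \leq \sum_{\emptyset \neq u \subseteq \{1, \ldots, s\}}\gamma'_u \left[ 1-\left(1-\frac{1}{N}\right)^{|u|}+\frac{R_{u,N}(\bsz)}{2}\right]. \]
Moreover, by \cite[Theorem~5.35]{Niedbook}, for every non-empty $u\subseteq\{1,\ldots,s\}$ we have
\[ R_{u,N}(\bsz) \leq \frac{1}{\phi_{u,0}(\bsz)}\left[ \log 2\, (\log_2 N)^{|u|}+3\, (2\log_2 N)^{|u|-1}\right]. \]
Since $\gamma_v\geq \gamma_u$ whenever $v\subset u$, the lower bound \eqref{eq:lower_bound_mu0} established in the proof of Theorem~\ref{thm:lat} gives
\[ \phi_{u,0}(\bsz) \geq \left( \frac{\gamma_u}{\rho_{\alpha,\bsgamma, N}(\bsz)}\right)^{1/(2\alpha)}, \]
so that
\[ \frac{1}{\phi_{u,0}(\bsz)} \leq \frac{(\rho_{\alpha,\bsgamma,N}(\bsz))^{1/(2\alpha)}}{\gamma_u^{1/(2\alpha)}}. \]
Combining these three estimates yields
\begin{align*}
D^*_{\bsgamma',N}(\bsz) \leq & \sum_{\emptyset \neq u \subseteq \{1, \ldots, s\}}\gamma'_u \Bigg[ 1-\left(1-\frac{1}{N}\right)^{|u|}  \\ & \quad\quad +\frac{(\rho_{\alpha,\bsgamma,N}(\bsz))^{1/(2\alpha)}}{2\gamma_u^{1/(2\alpha)}}\left[ \log 2\, (\log_2 N)^{|u|}+3\, (2\log_2 N)^{|u|-1}\right]\Bigg],
\end{align*}
which is the desired bound.
\end{proof}
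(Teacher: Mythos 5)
Your proof is correct and follows exactly the same chain of inequalities the paper uses: Joe's bound applied with the weights $\bsgamma'$, the Niederreiter bound on $R_{u,N}(\bsz)$ in terms of $1/\phi_{u,0}(\bsz)$, and the lower bound \eqref{eq:lower_bound_mu0} on $\phi_{u,0}(\bsz)$ that requires the monotonicity hypothesis on $\bsgamma$. The paper simply states the result follows from these three ingredients without writing out the substitution, so your write-up fills that in verbatim.
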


Applying the result from Proposition~\ref{prop:cbc_lat}, we can prove the following tractability properties.
For general weights $\bsgamma'$ we use \[ 1-\left(1-\frac{1}{N}\right)^{|u|}\leq \frac{|u|}{N}, \]
for any non-empty $u \subseteq \{1, \ldots, s\}$. We assume that the sum $\sum_{j=1}^{\infty}\gamma'_j<\infty$ in the case of product weights to ensure a dimension independent bound on the sum
\[ \sum_{\emptyset \neq u \subseteq \{1, \ldots, s\}}\gamma'_u \left[ 1-\left(1-\frac{1}{N}\right)^{|u|}\right]. \]
We refer to \cite[Lemma~1]{Joe06} for the case of product weights.

\begin{corollary}\label{cor:lat2}
Let $s,N\in \NN$, $\alpha> 1/2$ and $\bsgamma,\bsgamma'\in \RR_{\geq 0}^{\NN}$ such that $\gamma_v\geq \gamma_u$ whenever $v\subset u$. Let $\bsz\in \{1,\ldots,N-1\}^s$ be constructed by Algorithm~\ref{alg:cbc_lat} based on the criterion $P_{\alpha,\bsgamma,N}$. Then the following holds true:
\begin{enumerate}
\item For general weights $\bsgamma$ and $\bsgamma'$, assume that there exist $\lambda, \delta,q,q',q''\geq 0$ such that $1/(2\alpha)<\lambda<1$, $0 < \delta<1/(\alpha\lambda)$, 
\[ \sup_{s\in \NN}\frac{1}{s^q}\sum_{\emptyset\neq u \subseteq \{1, \ldots, s\}}\gamma^{\lambda}_u(2\zeta(2\alpha\lambda))^{|u|}<\infty,\quad \sup_{s\in \NN}\frac{1}{s^{q'}}\sum_{\emptyset\neq u \subseteq \{1, \ldots, s\}}\gamma'_u |u|<\infty, \]
and 
\[ \sup_{s,N\in \NN}\frac{1}{s^{q''}(\varphi(N))^{\delta}}\sum_{\emptyset \neq u \subseteq \{1, \ldots, s\}}\frac{\gamma'_u}{\gamma_u^{1/(2\alpha)}} (2\log_2 N)^{|u|} <\infty. \]
Then the weighted star discrepancy $D^*_{\bsgamma',N}(\bsz)$ depends only polynomially on $s$ and is bounded by  
\[D^*_{\bsgamma',N}(\bsz) \le Cs^{\max(q',q/(2\alpha\lambda)+q'')}(\varphi(N))^{-1/(2\alpha \lambda)+\delta}, \]
for some constant $C>0$ which is independent of $s$ and $N$. If the above conditions hold for $q=q'=q''=0$, then the weighted star discrepancy $D^*_{\bsgamma',N}(\bsz)$ is bounded independently of $s$.
\item In particular, in the case of product weights $\bsgamma$ and $\bsgamma'$, assume that there exists $\lambda\in (1/(2\alpha),1]$ such that
\[ \sum_{j=1}^{\infty}\gamma_j^{\lambda}<\infty \quad \text{and}\quad \sum_{j=1}^{\infty}\frac{\gamma'_j}{\gamma_j^{1/(2\alpha)}}<\infty. \]
Then the weighted star discrepancy $D^*_{\bsgamma',N}(\bsz)$ is bounded independently of $s$ by
\[D^*_{\bsgamma',N}(\bsz) \le C(\varphi(N))^{-1/(2\alpha \lambda)+\delta}, \]
for arbitrarily small $\delta>0$.
\end{enumerate}
\end{corollary}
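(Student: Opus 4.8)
The plan is to feed the CBC bound of Proposition~\ref{prop:cbc_lat} into the explicit discrepancy estimate of the preceding theorem, the link being the elementary domination of the figure of merit by the squared worst-case error: since $\rho_{\alpha,\bsgamma,N}(\bsz)$ is a maximum of nonnegative summands that all occur in the sum defining $P_{\alpha,\bsgamma,N}(\bsz)$, we have $\rho_{\alpha,\bsgamma,N}(\bsz)\leq P_{\alpha,\bsgamma,N}(\bsz)$. Raising Proposition~\ref{prop:cbc_lat} to the power $1/(2\alpha)$ then gives, for every $\lambda\in(1/(2\alpha),1]$,
\[ (\rho_{\alpha,\bsgamma,N}(\bsz))^{1/(2\alpha)}\leq\left(\frac{1}{\varphi(N)}\sum_{\emptyset\neq u\subseteq\{1,\ldots,s\}}\gamma_u^{\lambda}(2\zeta(2\alpha\lambda))^{|u|}\right)^{1/(2\alpha\lambda)}, \]
which is exactly the quantity that the first summability hypothesis of the corollary controls.

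For Item~1 I would start from the preceding theorem, absorb the two logarithmic factors via $\log 2\,(\log_2 N)^{|u|}+3\,(2\log_2 N)^{|u|-1}\leq(\log 2+3)(2\log_2 N)^{|u|}$ (valid for $N\geq2$), and split the bound into the ``main-term'' sum $\sum_u\gamma'_u[1-(1-1/N)^{|u|}]$ and the ``lattice'' sum $(\rho_{\alpha,\bsgamma,N}(\bsz))^{1/(2\alpha)}\sum_u(\gamma'_u/\gamma_u^{1/(2\alpha)})(2\log_2 N)^{|u|}$. The first sum is estimated by $1-(1-1/N)^{|u|}\leq|u|/N$ and the hypothesis $\sup_s s^{-q'}\sum_u\gamma'_u|u|<\infty$, yielding a term $O(s^{q'}/N)$; since $1/(2\alpha\lambda)<1$ one has $1/N\leq1/\varphi(N)\leq(\varphi(N))^{-1/(2\alpha\lambda)+\delta}$, so this term already has the claimed form. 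Into the second sum I insert the displayed bound for $(\rho_{\alpha,\bsgamma,N}(\bsz))^{1/(2\alpha)}$, contributing $s^{q/(2\alpha\lambda)}(\varphi(N))^{-1/(2\alpha\lambda)}$, and the third hypothesis, contributing $s^{q''}(\varphi(N))^{\delta}$; multiplying out and bounding the sum of two powers of $s$ by $s^{\max(q',\,q/(2\alpha\lambda)+q'')}$ yields the stated polynomial-in-$s$ bound, and the case $q=q'=q''=0$ is the dimension-independent statement.

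For Item~2 the work is to verify, for product weights, that the three summability conditions of Item~1 hold with $q=q'=q''=0$. The condition on $\bsgamma$ is immediate from $\sum_u\gamma_u^{\lambda}(2\zeta(2\alpha\lambda))^{|u|}=\prod_{j=1}^s(1+2\zeta(2\alpha\lambda)\gamma_j^{\lambda})-1\leq\exp\!\left(2\zeta(2\alpha\lambda)\sum_j\gamma_j^{\lambda}\right)<\infty$. Moreover $\sum_j\gamma_j^{\lambda}<\infty$ forces $\gamma_j\to0$, so $\gamma_j^{1/(2\alpha)}$ is bounded and $\sum_j\gamma'_j\leq C\sum_j\gamma'_j/\gamma_j^{1/(2\alpha)}<\infty$; then $\sum_u\gamma'_u|u|=\sum_{k=1}^s\gamma'_k\prod_{j\neq k}(1+\gamma'_j)\leq\left(\sum_j\gamma'_j\right)\exp\!\left(\sum_j\gamma'_j\right)<\infty$ gives the second condition (in essence \cite[Lemma~1]{Joe06}). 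The delicate point is the third condition: for product weights $\sum_u(\gamma'_u/\gamma_u^{1/(2\alpha)})(2\log_2 N)^{|u|}=\prod_{j=1}^s(1+2(\log_2 N)\beta_j)-1$ with $\beta_j:=\gamma'_j/\gamma_j^{1/(2\alpha)}$ and $\sum_j\beta_j<\infty$, and one needs this to be $O((\varphi(N))^{\delta})$ for every $\delta>0$. Here I would use the splitting argument of \cite[Lemma~3]{HN03}: choose $j_0=j_0(\delta)$ with $\sum_{j>j_0}\beta_j$ small, bound the finite head $\prod_{j\leq j_0}(1+2(\log_2 N)\beta_j)=O((\log_2 N)^{j_0})$, bound the tail by $\exp\!\left((2\log_2 N)\sum_{j>j_0}\beta_j\right)$, combine to get $O(N^{\delta})$, and pass from $N$ to $\varphi(N)$ via Remark~\ref{rem:euler_totient}. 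I expect this last splitting, together with tracking how the various small exponents relate, to be the only real obstacle; everything else is substitution of the hypotheses into the assembled inequality.
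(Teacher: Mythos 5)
Your argument is correct and is essentially the same proof the paper has in mind; the paper does not write a formal proof of Corollary~\ref{cor:lat2} but only indicates the ingredients (the bound $1-(1-1/N)^{|u|}\leq |u|/N$, Proposition~\ref{prop:cbc_lat}, and a pointer to \cite[Lemma~1]{Joe06} for the product case), and you have correctly assembled exactly those pieces. The only places you go beyond the paper's sketch are worth noting as genuine, small contributions of the write-up: (i) you observe that the auxiliary assumption $\sum_j\gamma'_j<\infty$, which the paper mentions separately, is actually implied by the stated hypotheses of Item~2 (since $\sum_j\gamma_j^\lambda<\infty$ forces $\sup_j\gamma_j^{1/(2\alpha)}<\infty$, so $\gamma'_j\leq C\,\gamma'_j/\gamma_j^{1/(2\alpha)}$); and (ii) you make explicit the cut-off/splitting argument from \cite[Lemma~3]{HN03} needed to absorb the factor $\prod_j(1+2(\log_2 N)\beta_j)$ into $(\varphi(N))^\delta$, together with the passage from $N$ to $\varphi(N)$ via Remark~\ref{rem:euler_totient}. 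These details are correct and are exactly what the paper implicitly delegates to the reader, so the proposal should be regarded as a faithful, fully detailed version of the intended proof rather than a different route.
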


Let us consider product weights for both $\bsgamma$ and $\bsgamma'$. The first summability condition on $\bsgamma$ is satisfied for any $\lambda\in (1/(2\alpha),1]$ if $\gamma_j=j^{-2\alpha}$. 
Then the second summability condition is given by
\[ \sum_{j=1}^{\infty}j \gamma'_j <\infty. \]
The weighted star discrepancy is bounded independently of $s$ and decays with the almost optimal rate $N^{-1+\delta}$ for arbitrarily small $\delta>0$. 
We note that the summability condition on $\bsgamma'$ is the same as that obtained in \cite{HPT19} (see also \cite{W02,W03}),
where the authors considered explicitly constructed point sets due to Halton, Sobol', and Niederreiter.
It should be pointed out, however, that the latter constructions are extensible in $N$, whereas our lattice point sets are not, so that we need to rerun Algorithm~\ref{alg:cbc_lat} with different values of $N$ based on the same criterion $P_{\alpha,\bsgamma,N}$.

\section{Stability of polynomial lattice rules}\label{sec:PolyLat}
Let us move on to stability of polynomial lattice rules in weighted Walsh spaces.

\subsection{Weighted Walsh spaces}

\begin{definition}
Let $b$ be a prime and $\omega_b:=\exp(2\pi \ri/b)$. For $k\in \NN\cup \{0\}$, we denote the $b$-adic expansion of $k$ by $k=\kappa_0+\kappa_1b+\cdots$, where all except a finite number of $\kappa_i$ are 0.
The $k$-th Walsh function $\wal_k\colon [0,1)\to \CC$ is defined by
\[ \wal_k(x) := \omega_b^{\kappa_0\xi_1+\kappa_1\xi_2+\cdots}, \]
where the $b$-adic expansion of $x\in [0,1)$ is denoted by $x=\xi_1/b+\xi_2/b^2+\cdots$, which is understood to be unique in the sense that infinitely many of the $\xi_i$ are different from $b-1$.

For $s\geq 2$ and $\bsk=(k_1,\ldots,k_s)\in (\NN\cup \{0\})^s$, the $s$-dimensional $\bsk$-th Walsh functions $\wal_{\bsk} \colon [0,1)^s\to \CC$ is defined by
\[ \wal_{\bsk}(\bsx) := \prod_{j=1}^{s}\wal_{k_j}(x_j). \]
\end{definition}

Note that we always use a fixed prime $b$ in the definition of Walsh functions in the rest of this paper. 
The system of Walsh functions is a complete orthogonal system in $L_2([0,1)^s)$. Let $f: [0,1)^s \to \mathbb{R}$ be given by its Walsh series
\[ f(\bsx) = \sum_{\bsk \in (\NN\cup \{0\})^s} \hat{f}(\bsk) \wal_{\bsk}(\bsx), \]
where $\hat{f}(\bsk)$ denotes the $\bsk$-th Walsh coefficient defined by
\[ \hat{f}(\bsk) = \int_{[0,1)^s}f(\bsx)\overline{\wal_{\bsk}(\bsx)} \rd \bsx. \]

Following \cite{DP05}, we measure a smoothness of non-periodic functions by a parameter $\alpha>1/2$.
For $k\in \NN$ with the $b$-adic expansion given by $k=\kappa_0+\kappa_1b+\cdots+\kappa_{a-1}b^{a-1}$ such that $\kappa_{a-1}\neq 0$, let $\mu(k)=a$. 
For a non-empty subset $u\subseteq \{1,\ldots,s\}$ and $\bsk_u\in \NN^{|u|}$, let $\mu(\bsk_u)=\sum_{j\in u}\mu(k_j)$.
Given a set of weights $\bsgamma = (\gamma_u)_{u\subset \NN}$, we define
\[ r_\alpha(\bsgamma, \bsk_u) := \gamma_u b^{-2\alpha \mu(\bsk_u)}. \]
Then the weighted Walsh space with smoothness $\alpha$, denoted by $H^{\wal}_{\alpha,\bsgamma}$, is a reproducing kernel Hilbert space with the reproducing kernel
\[ K^{\wal}_{\alpha,\bsgamma}(\bsx,\bsy) = 1+\sum_{\emptyset\neq u \subseteq \{1, \ldots, s\}}\sum_{\bsk_u \in \NN^{|u|}} r_\alpha(\bsgamma, \bsk_u)\wal_{\bsk_u}(\bsx_u)\overline{\wal_{\bsk_u}(\bsy_u)}, \]
and the inner product
\[ \langle f, g\rangle^{\wal}_{\alpha,\bsgamma} = \hat{f}(\bszero)\hat{g}(\bszero)+\sum_{\emptyset\neq u \subseteq \{1, \ldots, s\}}\sum_{\bsk_u \in \NN^{|u|}}  \frac{\hat{f}(\bsk_u,\bszero)\hat{g}(\bsk_u,\bszero)}{r_\alpha(\bsgamma, \bsk_u)} . \]
Here, for a non-empty subset $u\subseteq \{1,\ldots,s\}$ such that $\gamma_u=0$, we assume that the corresponding inner sum equals 0 and we set $0/0=0$.
The induced norm is then given by
\[ \| f\|^{\wal}_{\alpha,\bsgamma} = \sqrt{|\hat{f}(\bszero)|^2+\sum_{\emptyset\neq u \subseteq \{1, \ldots, s\}}\sum_{\bsk_u \in \NN^{|u|}} \frac{|\hat{f}(\bsk_u,\bszero)|^2}{r_\alpha(\bsgamma, \bsk_u)}} . \]

\subsection{CBC algorithm for polynomial lattice rules}
In order to construct a polynomial lattice rule which works for the weighted Walsh space $H^{\wal}_{\alpha,\bsgamma}$ with certain $\alpha$ and $\bsgamma$, we consider the worst-case error $e^{\wor}(P(p,\bsq);H^{\wal}_{\alpha,\bsgamma})$ as a quality criterion. Since the worst-case error depends only on the generating vector $\bsq$ for a given modulus $p\in \FF_b[x]$ with $\deg(p)=m$, we simply write $e^{\wor}(\bsq;H^{\wal}_{\alpha,\bsgamma})$.

For $k=\kappa_0+\kappa_1b+\cdots \in \NN\cup \{0\}$, let $\tr_m(k):=\kappa_0+\kappa_1x+\cdots+\kappa_{m-1}x^{m-1}\in G_m$. Define the dual lattice for $P(p,\bsq)$ by
\[ P^{\perp}(p,\bsq)= \{\bsk\in (\NN\cup \{0\})^s \mid \tr_m(\bsk)\cdot \bsq \equiv 0 \pmod p\}, \]
where we write $\tr_m(\bsk)\cdot \bsq =\tr_m(k_1)q_1+\cdots+\tr_m(k_s)q_s\in \FF_b[x]$.
The following property was first used in \cite{LT94} and is now well known, see \cite[Lemma~4.75]{DPbook}.
\begin{lemma}
For $m\in \NN$, $p\in \FF_b[x]$ with $\deg(p)=m$ and $\bsq \in (G_m\setminus \{0\})^s$, we have
\[ \frac{1}{b^m}\sum_{\bsx\in P(p,\bsq)}\wal_{\bsk}(\bsx)=\begin{cases} 1 & \text{if $\bsk\in P^\perp(p,\bsq)$,} \\ 0 & \text{otherwise.} \end{cases} \]
\end{lemma}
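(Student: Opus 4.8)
The plan is to turn the character sum over the polynomial lattice point set into a statement about an $\FF_b$-linear functional on $G_m$ and then finish with the orthogonality of the additive characters of $\FF_b$. The starting point is a pointwise identity between Walsh functions and coefficient extraction in the field of formal Laurent series in $x^{-1}$ over $\FF_b$: for every $k\in\NN\cup\{0\}$ and every $f=\sum_{i\geq w}t_ix^{-i}$ one has $\wal_k(\nu_m(f))=\omega_b^{v_{-1}(\tr_m(k)f)}$, where $v_{-1}(\cdot)$ denotes the coefficient of $x^{-1}$ (and $\FF_b$ is identified with $\{0,\ldots,b-1\}$ so that the exponent makes sense modulo $b$, using $\omega_b^b=1$). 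Unwinding the definitions of $\nu_m$, $\tr_m$ and the $b$-adic digit expansion, both sides equal $\omega_b$ raised to $\sum_{i=0}^{m-1}\kappa_it_{i+1}$, where $k=\kappa_0+\kappa_1b+\cdots$: multiplying $\tr_m(k)$, a polynomial of degree $<m$, by the non-negative-degree part of $f$ never contributes an $x^{-1}$-term, so only the digits $\kappa_0,\ldots,\kappa_{m-1}$ and the coefficients $t_1,\ldots,t_m$ enter, which matches exactly the finite sum in the definition of $\wal_k$.

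Applying this identity in each coordinate with $f=nq_j(x)/p(x)$ and using additivity of $v_{-1}$, the product $\wal_{\bsk}(\bsx_n)=\prod_{j}\wal_{k_j}(\nu_m(nq_j/p))$ collapses, for the point $\bsx_n\in P(p,\bsq)$ indexed by $n\in G_m$, to
\[ \wal_{\bsk}(\bsx_n)=\omega_b^{v_{-1}\!\left(n\,(\tr_m(\bsk)\cdot\bsq)/p\right)}. \]
Writing $\tr_m(\bsk)\cdot\bsq=h\,p+g$ with $\deg g<m$, the polynomial summand $nh$ contributes no $x^{-1}$-term, so the exponent equals $v_{-1}(ng/p)$, and $n\mapsto v_{-1}(ng/p)$ is an $\FF_b$-linear map from the $m$-dimensional $\FF_b$-vector space $G_m$ to $\FF_b$.

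It remains to sum over $n\in G_m$. If $\bsk\in P^\perp(p,\bsq)$ then $g=0$, every summand equals $1$, and the sum is $b^m$, which gives the first case after dividing by $b^m$. If $\bsk\notin P^\perp(p,\bsq)$ then $g\neq0$; since $\deg g<\deg p$, the Laurent expansion of $g/p$ has a first nonzero coefficient $u_\ell$ with $\ell\geq1$, and taking $n(x)=x^{\ell-1}\in G_m$ yields $v_{-1}(ng/p)=u_\ell\neq0$, so the linear functional $n\mapsto v_{-1}(ng/p)$ is not identically zero. Each of its $b$ fibers then has size $b^{m-1}$, whence the sum equals $b^{m-1}\sum_{c\in\FF_b}\omega_b^c=0$, giving the second case. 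The only genuinely fiddly point is the pointwise identity in the first paragraph, where one must line up the truncation conventions of $\nu_m$, $\tr_m$ and the digit expansion; everything after that is routine $\FF_b$-linearity together with the standard character sum $\sum_{c\in\FF_b}\omega_b^c=0$.
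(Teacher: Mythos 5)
Your proof is correct, and the paper itself does not supply a proof of this lemma --- it cites Larcher--Traunfellner~\cite{LT94} and \cite[Lemma~4.75]{DPbook}. Your argument is essentially the standard one given in those sources: establish the pointwise identity $\wal_k(\nu_m(f))=\omega_b^{v_{-1}(\tr_m(k)f)}$ by matching up the digit truncation in $\nu_m$ with the polynomial truncation in $\tr_m$, reduce the character sum to $\sum_{n\in G_m}\omega_b^{v_{-1}(ng/p)}$ where $g$ is the residue of $\tr_m(\bsk)\cdot\bsq$ modulo $p$, and then use $\FF_b$-linearity of $n\mapsto v_{-1}(ng/p)$ together with the vanishing of the full character sum $\sum_{c\in\FF_b}\omega_b^c=0$; the witness $n(x)=x^{\ell-1}$ showing nontriviality of the functional when $g\neq 0$ is the right choice and does lie in $G_m$ since $\ell\leq m$. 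The only place one must be careful, and you handle it explicitly, is the bookkeeping between the three truncation conventions ($\nu_m$, $\tr_m$, and the $b$-adic digit sum in the definition of $\wal_k$); the rest is routine.
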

For a non-empty subset $u\subseteq \{1,\ldots,s\}$, let us write 
\[ P^{\perp}_u(p,\bsq):= \{\bsk_u \in \NN^{|u|}\mid (\bsk_u,\bszero)\in P^{\perp}(p,\bsq)\}. \]
Then the squared worst-case error is simply given by
\[ \left(e^{\wor}(\bsq; H^{\wal}_{\alpha,\bsgamma})\right)^2 = \sum_{\emptyset\neq u \subseteq \{1, \ldots, s\}}\sum_{\bsk_u\in P^{\perp}_u(p,\bsq)}r_{\alpha}(\bsgamma, \bsk_u) =: P_{\alpha,\bsgamma,b^m}(\bsq). \]
There is a concise computable form of the criterion $P_{\alpha,\bsgamma,b^m}(\bsq)$
\[ P_{\alpha,\bsgamma,b^m}(\bsq) = \frac{1}{b^m}\sum_{\bsx\in P(p,\bsq)}\sum_{\emptyset\neq u \subseteq \{1, \ldots, s\}}\gamma_u \prod_{j\in u}\phi_\alpha(x_j), \]
where 
\[ \phi_\alpha(x) = \begin{cases} \frac{b-1}{b^{2\alpha}-b} & \text{for $x=0$,} \\ \frac{b-1}{b^{2\alpha}-b}-\frac{b^{2\alpha}-1}{b^{(2\alpha-1)a}(b^{2\alpha}-b)} & \text{for $\xi_1=\cdots=\xi_{a-1}=0$ and $\xi_a\neq 0$,}\end{cases} \]
see for instance \cite{DP05}. We note that $P_{\alpha,\bsgamma,b^m}(\bsq)$ is bounded below by another quality criterion
\[ \rho_{\alpha,\bsgamma,b^m}(\bsq) := \max_{\emptyset\neq u \subseteq \{1, \ldots, s\}}\max_{\bsk_u \in P_u^\perp(p,\bsq)} r_{\alpha}(\bsgamma, \bsk_u).\]

The CBC algorithm for polynomial lattice rules proceeds as follows:
\begin{algorithm}[CBC for polynomial lattice rules]\label{alg:cbc_polylat}
Let $s,m\in \NN$, $p\in \FF_b[x]$ with $\deg(p)=m$, $\alpha>1/2$ and $\bsgamma$ be given. 
\begin{enumerate}
\item Let $q^*_1=1\in \FF_b[x]$ and $\ell=1$.
\item Compute $P_{\alpha,\bsgamma,b^m}(q^*_1,\ldots,q^*_\ell, q_{\ell+1})$ for all $q_{\ell+1}\in G_m\setminus \{0\}$ and let
\[ q^*_{\ell+1}= \arg\min_{q_{\ell+1}}P_{\alpha,\bsgamma,b^m}(q^*_1,\ldots,q^*_\ell, q_{\ell+1}). \]
\item If $\ell+1<s$, let $\ell=\ell+1$ and go to Step~2.
\end{enumerate}
\end{algorithm}

\begin{remark}\label{rem:cbc2}
Similarly to lattice rules, the necessary computational cost for the CBC algorithm for polynomial lattice rules can be also made significantly small by using the fast Fourier transform.
For instance, in the case of product weights, we only need $O(sN\log N)$ arithmetic operations with $O(N)$ memory.
\end{remark}

As shown in \cite[Theorem~4.4]{DKPS05} for the product-weight cases, the worst-case error for polynomial lattice rules constructed by the CBC algorithm can be bounded as follows.
\begin{proposition}\label{prop:cbc_lat2}
Let $s,m\in \NN$, $\alpha>1/2$ and $\bsgamma$ be given. Let $p\in \FF_b[x]$ be irreducible with $\deg(p)=m$. The generating vector $\bsq$ constructed by Algorithm~\ref{alg:cbc_polylat} satisfies
\begin{align}\label{bound_rho_polylat}
 \rho_{\alpha,\bsgamma,b^m}(\bsq) \leq P_{\alpha,\bsgamma,b^m}(\bsq) \leq \left(\frac{1}{b^m-1} \sum_{\emptyset\neq u \subseteq \{1, \ldots, s\}}\gamma^{\lambda}_u\left( \frac{b-1}{b^{2\alpha\lambda}-b}\right)^{|u|}\right)^{1/\lambda}, 
\end{align}
for any $1/(2\alpha)<\lambda\leq 1$.
\end{proposition}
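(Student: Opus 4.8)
The first inequality $\rho_{\alpha,\bsgamma,b^m}(\bsq)\le P_{\alpha,\bsgamma,b^m}(\bsq)$ is immediate, since $\rho_{\alpha,\bsgamma,b^m}(\bsq)$ is the maximum of a subfamily of the nonnegative summands $r_\alpha(\bsgamma,\bsk_u)$ appearing in the sum $P_{\alpha,\bsgamma,b^m}(\bsq)$. For the upper bound I plan to argue by induction on the dimension $d=1,\dots,s$, showing that the first $d$ components $q_1^*,\dots,q_d^*$ produced by Algorithm~\ref{alg:cbc_polylat} satisfy
\[
\bigl(P_{\alpha,\bsgamma,b^m}(q_1^*,\dots,q_d^*)\bigr)^{\lambda}\le\frac{1}{b^m-1}\sum_{\emptyset\neq u\subseteq\{1,\dots,d\}}\gamma_u^{\lambda}\left(\frac{b-1}{b^{2\alpha\lambda}-b}\right)^{|u|};
\]
the case $d=s$, after raising both sides to the power $1/\lambda$, is the assertion of the proposition. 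Throughout I would use the elementary identity $\sum_{k=1}^{\infty}b^{-2\beta\mu(k)}=(b-1)/(b^{2\beta}-b)$, valid for any $\beta>1/2$ (group the $k$ with $\mu(k)=a$, of which there are $b^{a-1}(b-1)$, and sum a geometric series), both for $\beta=\alpha$ and for $\beta=\alpha\lambda$, the latter being legitimate precisely because $2\alpha\lambda>1$; the same grouping gives $\sum_{k\in\NN:\tr_m(k)=0}b^{-2\beta\mu(k)}=b^{-2\beta m}(b-1)/(b^{2\beta}-b)$, since $\tr_m(k)=0$ forces $k$ to be a multiple of $b^m$.

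For the base case $d=1$ one has $q_1^*=1$ and $P^{\perp}_{\{1\}}(p,1)=\{k\in\NN:\tr_m(k)\equiv0\pmod p\}$, which equals $\{k\in\NN:\tr_m(k)=0\}$ because $\deg\tr_m(k)<m=\deg p$; hence $P_{\alpha,\bsgamma,b^m}(1)=\gamma_{\{1\}}\,b^{-2\alpha m}(b-1)/(b^{2\alpha}-b)$, and applying Jensen's inequality to the sum over $k$ yields $\bigl(P_{\alpha,\bsgamma,b^m}(1)\bigr)^{\lambda}\le\gamma_{\{1\}}^{\lambda}\,b^{-2\alpha\lambda m}(b-1)/(b^{2\alpha\lambda}-b)$. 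Since $2\alpha\lambda>1$ we have $b^{-2\alpha\lambda m}\le 1/(b^m-1)$, which is the required bound for $d=1$. For the inductive step I would isolate the subsets containing $d$: with
\[
\theta_d(q):=\sum_{v\subseteq\{1,\dots,d-1\}}\gamma_{v\cup\{d\}}\sum_{\bsk\in P^{\perp}_{v\cup\{d\}}(p,(q_1^*,\dots,q_{d-1}^*,q))}b^{-2\alpha\mu(\bsk)}
\]
one has $P_{\alpha,\bsgamma,b^m}(q_1^*,\dots,q_d^*)=P_{\alpha,\bsgamma,b^m}(q_1^*,\dots,q_{d-1}^*)+\theta_d(q_d^*)$, so by Jensen's inequality $\bigl(P_{\alpha,\bsgamma,b^m}(q_1^*,\dots,q_d^*)\bigr)^{\lambda}\le\bigl(P_{\alpha,\bsgamma,b^m}(q_1^*,\dots,q_{d-1}^*)\bigr)^{\lambda}+\bigl(\theta_d(q_d^*)\bigr)^{\lambda}$. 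The first term is treated by the induction hypothesis, and it remains to show $\bigl(\theta_d(q_d^*)\bigr)^{\lambda}\le(b^m-1)^{-1}\sum_{d\in u\subseteq\{1,\dots,d\}}\gamma_u^{\lambda}\bigl((b-1)/(b^{2\alpha\lambda}-b)\bigr)^{|u|}$.

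Because Algorithm~\ref{alg:cbc_polylat} selects $q_d^*$ as a minimizer of $P_{\alpha,\bsgamma,b^m}(q_1^*,\dots,q_{d-1}^*,\cdot)$, which differs from $\theta_d(\cdot)$ by a quantity not depending on $q$, the polynomial $q_d^*$ also minimizes $\theta_d$, hence $\theta_d^{\lambda}$; therefore $\bigl(\theta_d(q_d^*)\bigr)^{\lambda}=\min_{q\in G_m\setminus\{0\}}\bigl(\theta_d(q)\bigr)^{\lambda}\le\frac{1}{b^m-1}\sum_{q\in G_m\setminus\{0\}}\bigl(\theta_d(q)\bigr)^{\lambda}$. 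Applying Jensen's inequality to the double sum defining $\theta_d(q)$ replaces each summand $\gamma_{v\cup\{d\}}b^{-2\alpha\mu(\bsk)}$ by $\gamma_{v\cup\{d\}}^{\lambda}b^{-2\alpha\lambda\mu(\bsk)}$, so $\bigl(\theta_d(q)\bigr)^{\lambda}\le\widetilde\theta_d(q)$, where $\widetilde\theta_d$ is defined exactly as $\theta_d$ but with $(\alpha,\bsgamma)$ replaced by $(\alpha\lambda,\bsgamma^{\lambda})$; it thus suffices to bound $\frac{1}{b^m-1}\sum_{q}\widetilde\theta_d(q)$. Here I would interchange the sum over $q\in G_m\setminus\{0\}$ with the sum over $\bsk=(\bsk_v,k_d)$ and count, for fixed $\bsk$, the $q$ with $\tr_m(k_d)\,q\equiv-\sum_{j\in v}\tr_m(k_j)q_j^*\pmod p$. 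This is the point at which the irreducibility of $p$ is used: $\FF_b[x]/(p)$ is then a field, so if $\tr_m(k_d)\neq0$ the congruence has a unique solution in $G_m$ (admissible, i.e.\ nonzero, unless $\bsk_v\in P^{\perp}_v(p,(q_1^*,\dots,q_{d-1}^*))$), while if $\tr_m(k_d)=0$ it holds for all $b^m-1$ admissible $q$ when $\bsk_v\in P^{\perp}_v$ and for none otherwise. Carrying out the geometric sums, $\sum_{q}\widetilde\theta_d(q)$ equals a sum over $v$ of $\gamma_{v\cup\{d\}}^{\lambda}$ times a main term $\bigl((b-1)/(b^{2\alpha\lambda}-b)\bigr)^{|v|+1}(1-b^{-2\alpha\lambda m})$ minus a nonnegative correction proportional to $\sum_{\bsk_v\in P^{\perp}_v}b^{-2\alpha\lambda\mu(\bsk_v)}$, the correction being nonnegative because $1-b^{m(1-2\alpha\lambda)}>0$; discarding the correction, bounding $1-b^{-2\alpha\lambda m}\le1$, and reindexing $w=v\cup\{d\}$ gives $\frac{1}{b^m-1}\sum_q\widetilde\theta_d(q)\le(b^m-1)^{-1}\sum_{d\in w\subseteq\{1,\dots,d\}}\gamma_w^{\lambda}\bigl((b-1)/(b^{2\alpha\lambda}-b)\bigr)^{|w|}$, which is the required estimate. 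Summing the resulting recursion over $d=1,\dots,s$ and using that every nonempty $u\subseteq\{1,\dots,s\}$ has a unique largest element completes the induction.

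The main obstacle is the averaging step, and specifically the handling of the ``unavoidable'' dual vectors --- those lying in $P^{\perp}_{v\cup\{d\}}$ for every choice of the $d$-th component, which occur exactly when $k_d$ is a multiple of $b^m$. A crude bound on their contribution would inflate the constant $1/(b^m-1)$; the clean bound in the proposition survives only because, in the exact formula for $\sum_q\widetilde\theta_d(q)$, these terms enter with a favourable sign (thanks to $2\alpha\lambda>1$) and can simply be dropped. Getting the sign bookkeeping right, inserting the subadditivity of $t\mapsto t^{\lambda}$ in the right places, and checking the degenerate subset $v=\emptyset$ are the only delicate points; note that, unlike in Theorem~\ref{thm:lat}, no monotonicity assumption on the weights is needed.
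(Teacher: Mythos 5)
Your proof is correct and follows the standard CBC averaging argument (inductive decomposition of the worst-case error, replacing the minimum over $q$ by the average, and applying Jensen's inequality twice), which is precisely the method behind the cited \cite[Theorem~4.4]{DKPS05} for product weights; the paper itself gives no proof, so you have in effect supplied the straightforward general-weights extension that the paper leaves implicit. Your bookkeeping of the ``unavoidable'' dual vectors with $\tr_m(k_d)=0$, and the observation that their contribution enters with a favourable sign because $2\alpha\lambda>1$, is exactly the delicate point one has to get right, and you handled it correctly.
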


The rate of convergence of the worst-case error obtained from Proposition~\ref{prop:cbc_lat2} can be arbitrarily close to $O(N^{-\alpha})$, and analogously to Remark~\ref{rem:euler_totient}, this is almost optimal. Also, an argument similar to Remark~\ref{rem:jensen} leads to
\[ \left( P_{\alpha/\delta,\bsgamma^{1/\delta},b^m}(\bsq)\right)^{\delta} \leq P_{\alpha,\bsgamma,b^m}(\bsq) \]
for any $0<\delta\leq 1$.
Thus the generating vector $\bsq$ constructed by Algorithm~\ref{alg:cbc_polylat} based on the criterion $P_{\alpha,\bsgamma,b^m}(\bsq)$ also works for weighted Walsh spaces with special types of smoothness and weights, i.e.,
\[ \alpha'=\frac{\alpha}{\delta}, \quad \bsgamma'=\bsgamma^{1/\delta}. \]
We show more general stability of polynomial lattice rules in the subsequent subsection.

\subsection{Stability result}
As an analogous result for polynomial lattice rules, we prove the following upper bound on the squared worst-case error.
\begin{theorem}\label{thm:polylat}
Let $s,m\in \NN$, $p\in \FF_b[x]$ with $\deg(p)=m$, and $\bsq\in (G_m\setminus \{0\})^s$ be given. For any $\alpha,\alpha'> 1/2$ and $\bsgamma,\bsgamma'\in \RR_{\geq 0}^{\NN}$, we have 
\[ P_{\alpha',\bsgamma',b^m}(\bsq) \leq \left(\rho_{\alpha,\bsgamma,b^m}(\bsq)\right)^{\alpha'/\alpha} \sum_{\emptyset\neq u \subseteq \{1, \ldots, s\}}\frac{\gamma'_u}{\gamma_u^{\alpha'/\alpha}}\left(\frac{b^{2\alpha'-1}(b-1)}{b^{2\alpha'-1}-1}\right)^{|u|} (m+1)^{|u|-1} .\]
\end{theorem}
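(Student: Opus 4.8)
The plan is to mimic the proof of Theorem~\ref{thm:lat} in the Walsh/polynomial-lattice setting, which is in fact cleaner because there is no distinction between $P^\perp_u$ and a "zero-padded" version: the quantity $\mu(k_j)$ already plays the role of $\log_2\max(1,|k_j|)$ and the natural substitute for $\phi_{u,0}(\bsz)$ is $b^{\mu_u}$ where $\mu_u:=\min_{\bsk_u\in P^\perp_u(p,\bsq)}\mu(\bsk_u)$. First I would rewrite
\[ P_{\alpha',\bsgamma',b^m}(\bsq) = \sum_{\emptyset\neq u\subseteq\{1,\ldots,s\}}\gamma'_u\sum_{\bsk_u\in P^\perp_u(p,\bsq)} b^{-2\alpha'\mu(\bsk_u)}. \]
For a fixed $u$, I would estimate the inner sum by grouping the $\bsk_u\in\NN^{|u|}$ according to the value $t=\mu(\bsk_u)\ge\mu_u$; the number of $k$ with $\mu(k)=\ell$ is $(b-1)b^{\ell-1}$, so the number of $\bsk_u$ with $\mu(\bsk_u)=t$ is the coefficient-counting quantity $\sum (b-1)^{|u|}b^{t-|u|}\binom{t-1}{|u|-1}$ (compositions of $t$ into $|u|$ positive parts), giving
\[ \sum_{\bsk_u\in P^\perp_u(p,\bsq)} b^{-2\alpha'\mu(\bsk_u)} \le (b-1)^{|u|}\sum_{t=\mu_u}^{\infty} b^{-2\alpha' t} b^{t-|u|}\binom{t-1}{|u|-1}. \]

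Next I would apply Lemma~\ref{lem:inequ1} with $b$ replaced by $b^{2\alpha'-1}$, $k=|u|-1$ and $t_0=\mu_u$ (after shifting the binomial index), to bound this geometric-type sum by a constant multiple of $b^{-2\alpha'\mu_u}\,b^{\mu_u}\binom{\mu_u-1}{|u|-1}\bigl(1-b^{1-2\alpha'}\bigr)^{-(|u|-1)}$. Collecting the factors $(b-1)^{|u|}$, $b^{-|u|}$ and $\bigl(1-b^{1-2\alpha'}\bigr)^{-(|u|-1)}=\bigl(b^{2\alpha'-1}/(b^{2\alpha'-1}-1)\bigr)^{|u|-1}$ produces the claimed constant $\bigl(b^{2\alpha'-1}(b-1)/(b^{2\alpha'-1}-1)\bigr)^{|u|}$ up to one extra factor that I will absorb, and bounding $\binom{\mu_u-1}{|u|-1}\le(\mu_u+1)^{|u|-1}$ — together with the trivial bound $\mu_u\le m$, so that $(\mu_u+1)^{|u|-1}\le(m+1)^{|u|-1}$ — yields the $(m+1)^{|u|-1}$ factor. (I should double-check the small cases $|u|=1$ and $t=\mu_u$ edge effects, and the precise index shift in Lemma~\ref{lem:inequ1}, since those are where off-by-one errors creep in; this is the only genuinely fiddly bookkeeping.)

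Finally, to convert $b^{-2\alpha'\mu_u}$ into the $\rho$-term, I observe that by definition
\[ \rho_{\alpha,\bsgamma,b^m}(\bsq) \ge \gamma_u \max_{\bsk_u\in P^\perp_u(p,\bsq)} b^{-2\alpha\mu(\bsk_u)} = \gamma_u b^{-2\alpha\mu_u}, \]
hence $b^{-\mu_u}\le\bigl(\rho_{\alpha,\bsgamma,b^m}(\bsq)/\gamma_u\bigr)^{1/(2\alpha)}$ and therefore $b^{-2\alpha'\mu_u}\le\bigl(\rho_{\alpha,\bsgamma,b^m}(\bsq)\bigr)^{\alpha'/\alpha}\gamma_u^{-\alpha'/\alpha}$. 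Substituting this into the per-$u$ estimate and summing over $u$ gives exactly the asserted bound. The main obstacle is purely the combinatorial bookkeeping in the middle step — correctly counting the $\bsk_u$ with prescribed $\mu(\bsk_u)$ and matching the resulting binomial sum to the hypotheses of Lemma~\ref{lem:inequ1} so that the constants come out as stated; everything else is direct. Unlike Theorem~\ref{thm:lat}, no monotonicity assumption on the weights $\bsgamma$ is needed here, because $\mu(\bsk_u)$ is additive over $j\in u$ and the relevant minimum over $P^\perp_u$ is taken with the full index set, so there is no analogue of the $\phi_{u,0}$ versus $\phi_u$ discrepancy to reconcile.
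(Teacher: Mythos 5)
There is a genuine gap, and it sits exactly where you flagged as ``fiddly bookkeeping'': the counting step. You bound the number of $\bsk_u$ with $\mu(\bsk_u)=t$ by the count over all of $\NN^{|u|}$, namely $(b-1)^{|u|}b^{t-|u|}\binom{t-1}{|u|-1}$, without using the fact that $\bsk_u$ must lie in the sparse dual lattice $P^\perp_u(p,\bsq)$. Tracing through your own estimate, this gives
\[
\sum_{\bsk_u\in P^\perp_u(p,\bsq)}b^{-2\alpha'\mu(\bsk_u)}
\ \le\ \Bigl(\tfrac{(b-1)b^{2\alpha'-2}}{b^{2\alpha'-1}-1}\Bigr)^{|u|}\, b^{-(2\alpha'-1)\mu_u}\binom{\mu_u-1}{|u|-1},
\]
i.e.\ the power is $b^{-(2\alpha'-1)\mu_u}$, \emph{not} $b^{-2\alpha'\mu_u}$ as you write later when converting to the $\rho$-term. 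Since $\mu_u=\phi_u(\bsq)$ scales like $m$, the overcounting is roughly by a factor $b^{m}$, and it is fatal: with $b^{-\mu_u}\le(\rho_{\alpha,\bsgamma,b^m}(\bsq)/\gamma_u)^{1/(2\alpha)}$, you only obtain $\rho^{(2\alpha'-1)/(2\alpha)}$ instead of the claimed $\rho^{\alpha'/\alpha}$, which is a strictly weaker statement (and would not yield the advertised $O(N^{-\alpha'})$ rate when combined with the CBC bound).

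The fix is precisely what the paper does. One must count $\bsk_u$ with $\mu(k_j)=\ell_j$ \emph{inside} the dual lattice, and \cite[Lemma~13.8]{DPbook} gives the sharper bound
\[
\#\{\bsk_u\in P^\perp_u(p,\bsq):\mu(k_j)=\ell_j,\ \forall j\in u\}
\ \le\ (b-1)^{|u|}b^{|\bsell_u|_1-\phi_u(\bsq)}
\]
for $|\bsell_u|_1\ge\phi_u(\bsq)$, which replaces your $b^{t-|u|}$ by $b^{t-\phi_u(\bsq)}$ and thereby produces the extra factor $b^{-(\phi_u(\bsq)-|u|)}$ needed to turn $b^{-(2\alpha'-1)\phi_u(\bsq)}$ into $b^{-2\alpha'\phi_u(\bsq)}$. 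After that, the rest of your route (regroup by $h=\mu(\bsk_u)$, sum over compositions contributing $\binom{h-1}{|u|-1}$, apply Lemma~\ref{lem:inequ1} with $b\mapsto b^{2\alpha'-1}$) goes through and reproduces the paper's constant $\bigl(b^{2\alpha'-1}(b-1)/(b^{2\alpha'-1}-1)\bigr)^{|u|}$. Two smaller corrections: the trivial upper bound is $\phi_u(\bsq)\le m+|u|$, not $m$ (take $k_j=1$ for all but one coordinate and solve for the last, or take $k_i=b^m$); and you should use $\binom{\phi_u(\bsq)-1}{|u|-1}\le(\phi_u(\bsq)-|u|+1)^{|u|-1}\le(m+1)^{|u|-1}$ rather than $(\mu_u+1)^{|u|-1}$. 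Your observation that no monotonicity of $\bsgamma$ is needed here is correct and agrees with the paper.
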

\begin{proof}
Recalling the definition of $r_{\alpha'}$, we have
\[ P_{\alpha',\bsgamma',b^m}(\bsq) = \sum_{\emptyset\neq u \subseteq \{1, \ldots, s\}}\gamma'_u\sum_{\bsk_u\in P^{\perp}_u(p,\bsq)}b^{-2\alpha' \mu(\bsk_u)}. \]
Let us define
\[ \phi_u(\bsq) := \min_{\bsk_u\in P^\perp_u(p,\bsq)}\mu(\bsk_u). \]
Since $P^\perp_u(p,\bsq)\subseteq \NN^{|u|}$, it is easy to see that $\phi_u(\bsq)\geq |u|$.
Moreover, as we have
\[ \rho_{\alpha,\bsgamma,b^m}(\bsq) = \max_{\emptyset \neq u \subseteq \{1, \ldots, s\}}\gamma_u\max_{\bsk_u\in P^{\perp}_u(p,\bsq)}b^{-2\alpha \mu(\bsk_u)}= \max_{\emptyset \neq u \subseteq \{1, \ldots, s\}}\gamma_ub^{-2\alpha \phi_u(\bsq)},  \]
it holds that
\begin{align}\label{eq:bound_phi}
 \phi_u(\bsq)\geq \frac{1}{2\alpha}\log_b \frac{\gamma_u}{\rho_{\alpha,\bsgamma,b^m}(\bsq)},
\end{align}
for any non-empty subset $u\subseteq \{1,\ldots,s\}$. 

Now it follows from the definition of $\phi_u(\bsq)$ that the inner sum of $P_{\alpha',\bsgamma',b^m}(\bsq)$ over $\bsk_u$ becomes
\begin{align*}
\sum_{\bsk_u \in P^\perp_u(p,\bsq)} b^{-2\alpha'\mu(\boldsymbol{k}_u)} & = \sum_{h=\phi_u(\bsq)}^{\infty}b^{-2\alpha' h} \sum_{\substack{\bsk_u \in P^\perp_u(p,\bsq)\\ \mu(\bsk_u)=h}}1 \\
& = \sum_{h=\phi_u(\bsq)}^{\infty}b^{-2\alpha' h} \sum_{\substack{\bsell_u\in \NN^{|u|}\\ |\bsell_u|_1=h}}\sum_{\substack{\bsk_u \in P^\perp_u(p,\bsq)\\ \mu(k_j)=\ell_j, \forall j\in u}}1.
\end{align*}
Regarding the inner-most sum above, \cite[Lemma~13.8]{DPbook} gives
\[ \sum_{\substack{\bsk_u \in P^\perp_u(p,\bsq)\\ \mu(k_j)=\ell_j, \forall j\in u}}1 \leq \begin{cases}
0 & \text{if $|\bsell_u|_1 < \phi_u(\bsq)$,} \\
(b-1)^{|u|} & \text{if $\phi_u(\bsq)\leq |\bsell_u|_1 < \phi_u(\bsq)+|u|$,} \\
(b-1)^{|u|}b^{|\bsell_u|_1-(\phi_u(\bsq)+|u|-1)} & \text{otherwise.} 
\end{cases} \]
In particular, when $|\bsell_u|_1\geq \phi(\bsq_u)$, we can simplify this bound as
\[ \sum_{\substack{\bsk_u \in P^\perp_u(p,\bsq)\\ \mu(k_j)=\ell_j, \forall j\in u}}1 \leq (b-1)^{|u|}b^{|\bsell_u|_1-\phi_u(\bsq)}. \]
Thus, by applying Lemma~\ref{lem:inequ1} with $t_0=\phi_u(\bsq)-|u|$, $k=|u|$, and $b$ taken to be $b^{2\alpha'-1}$, we have
\begin{align*}
\sum_{\bsk_u \in P^\perp_u(p,\bsq)} b^{-2\alpha'\mu(\boldsymbol{k}_u)} & \leq (b-1)^{|u|}\sum_{h=\phi_u(\bsq)}^{\infty}b^{-2\alpha' h} \sum_{\substack{\bsell_u\in \NN^{|u|}\\ |\bsell_u|_1=h}}b^{h-\phi_u(\bsq)} \\
& =  (b-1)^{|u|}b^{-\phi_u(\bsq)}\sum_{h=\phi_u(\bsq)}^{\infty}b^{-(2\alpha'-1) h}\binom{h-1}{|u|-1} \\
& =  \left(\frac{b-1}{b^{2\alpha'-1}}\right)^{|u|}b^{-\phi_u(\bsq)}\sum_{h=\phi_u(\bsq)-|u|}^{\infty}b^{-(2\alpha'-1) h}\binom{h+|u|-1}{|u|-1} \\
& \leq \left(\frac{b^{2\alpha'-1}(b-1)}{b^{2\alpha'-1}-1}\right)^{|u|} b^{-2\alpha' \phi_u(\bsq)}\binom{\phi_u(\bsq)-1}{|u|-1} \\
& = \left(\frac{b^{2\alpha'-1}(b-1)}{b^{2\alpha'-1}-1}\right)^{|u|} b^{-2\alpha' \phi_u(\bsq)}\prod_{i=1}^{|u|-1}\frac{\phi_u(\bsq)-|u|+i}{i} \\
& \leq \left(\frac{b^{2\alpha'-1}(b-1)}{b^{2\alpha'-1}-1}\right)^{|u|} b^{-2\alpha' \phi_u(\bsq)}(\phi_u(\bsq)-|u|+1)^{|u|-1}.
\end{align*}

Here we show that $\phi_u(\bsq)$ has a trivial upper bound, that is $\phi_u(\bsq)\leq m+|u|$.
Consider the case $k_j=1$ for all $j\in u\setminus \{i\}$ with arbitrarily chosen $i\in u$. Recalling that every $\bsk_u\in P_u^\perp(p,\bsq)$ satisfies
\[ \tr_m(\bsk_u)\cdot \bsq_u=\tr_m(\bsk_{u\setminus \{i\}})\cdot \bsq_{u\setminus \{i\}}+\tr_m(k_i) q_i \equiv 0 \pmod p,\] 
if $\tr_m(\bsk_{u\setminus \{i\}})\cdot \bsq_{u\setminus \{i\}} \not\equiv 0 \pmod p$, then $\tr_m(k_i) q_i\not\equiv 0 \pmod p$. Since $q_i\not\equiv 0 \pmod p$, there exists a unique $k_i\in \{1,\ldots,b^m-1\}$ such that $\tr_m(k_i) q_i\equiv -\tr_m(\bsk_{u\setminus \{i\}})\cdot \bsq_{u\setminus \{i\}}\pmod p$. Thus we have 
\[ \mu(\bsk_u) = \sum_{j\in u\setminus \{i\}}\mu(1)+\mu(k_i) \leq |u|-1+m.\]
Now let us assume $\tr_m(\bsk_{u\setminus \{i\}})\cdot \bsq_{u\setminus \{i\}} \equiv 0 \pmod p$. 
Then we must have $\tr_m(k_i) q_i\equiv 0 \pmod p$. 
Since $k_i> 0$ for any $\bsk_u\in P_u^\perp(p,\bsq)$, $k_i$ is given by $\ell b^m$ for $\ell\in \NN$. 
Taking $k_i=b^m$, we have
\[ \mu(\bsk_u) = \sum_{j\in u\setminus \{i\}}\mu(1)+\mu(b^m) = |u|-1+(m+1)=|u|+m.\]
This argument means that there exists at least one $\bsk_u\in P_u^\perp(p,\bsq)$ such that 
\[ \mu(\bsk_u) \leq |u|+m.\]
which proves our claim
\[ \phi_u(\bsq) = \min_{\bsk_u\in P^\perp_u(p,\bsq)}\mu(\bsk_u) \leq m+|u|. \]

Using this upper bound on $\phi_u(\bsq)$ and the lower bound on $\phi_u(\bsq)$ given in \eqref{eq:bound_phi}, we have
\[ \sum_{\bsk_u \in P^\perp_u(p,\bsq)} b^{-2\alpha'\mu(\boldsymbol{k}_u)}  \leq \left(\rho_{\alpha,\bsgamma,b^m}(\bsq)\right)^{\alpha'/\alpha}\gamma_u^{-\alpha'/\alpha}\left(\frac{b^{2\alpha'-1}(b-1)}{b^{2\alpha'-1}-1}\right)^{|u|}(m+1)^{|u|-1}. \]
Finally we can bound $P_{\alpha',\bsgamma',b^m}(\bsq)$ as
\[ P_{\alpha',\bsgamma',b^m}(\bsq) \leq \left(\rho_{\alpha,\bsgamma,b^m}(\bsq)\right)^{\alpha'/\alpha} \sum_{\emptyset\neq u \subseteq \{1, \ldots, s\}}\frac{\gamma'_u}{\gamma_u^{\alpha'/\alpha}}\left(\frac{b^{2\alpha'-1}(b-1)}{b^{2\alpha'-1}-1}\right)^{|u|} (m+1)^{|u|-1} .\]
This completes the proof.
\end{proof}

Considering the result from Proposition~\ref{prop:cbc_lat2}, we can show tractability results similarly to Corollary~\ref{cor:lat}.
\begin{corollary}\label{cor:polylat}
Let $s,m\in \NN$, $\alpha,\alpha'> 1/2$ and $\bsgamma,\bsgamma'\in \RR_{\geq 0}^{\NN}$. Let $p\in \FF_b[x]$ with $\deg(p)=m$ be irreducible and $\bsq\in (G_m\setminus \{0\})^s$ be constructed by Algorithm~\ref{alg:cbc_polylat} based on the criterion $P_{\alpha,\bsgamma,b^m}$. Then the following holds true:
\begin{enumerate}
\item For general weights $\bsgamma$ and $\bsgamma'$, assume that there exist $\lambda, \delta,q,q'\geq 0$ such that $1/(2\alpha)<\lambda<1$, $0 < \delta<\alpha'/(\alpha\lambda)$, 
\[ \sup_{s\in \NN}\frac{1}{s^q}\sum_{\emptyset\neq u \subseteq \{1, \ldots, s\}}\gamma^{\lambda}_u\left( \frac{b-1}{b^{2\alpha\lambda}-b}\right)^{|u|} <\infty, \]
and
\[ \sup_{s,m\in \NN}\frac{1}{s^{q'}b^{\delta m}}\sum_{\emptyset \neq u \subseteq \{1, \ldots, s\}}\frac{\gamma'_u}{\gamma_u^{\alpha'/\alpha}}\left(\frac{b^{2\alpha'-1}(b-1)}{b^{2\alpha'-1}-1}\right)^{|u|} (m+1)^{|u|-1} <\infty. \]
Then the worst-case error $P_{\alpha',\bsgamma',b^m}(\bsq)$ depends only polynomially on $s$ and is bounded by  
\[P_{\alpha', \bsgamma',b^m}(\bsq) \le Cs^{q\alpha'/(\alpha\lambda)+q'}b^{-(\alpha'/(\alpha \lambda)-\delta)m}, \]
for some constant $C>0$ which is independent of $s$ and $m$. If the above conditions hold for $q=q'=0$, the worst-case error $P_{\alpha',\bsgamma',b^m}(\bsq)$ is bounded independently of $s$.
\item In particular, in the case of product weights $\bsgamma$ and $\bsgamma'$, assume that there exists $\lambda\in (1/(2\alpha),1]$ such that
\[ \sum_{j=1}^{\infty}\gamma_j^{\lambda}<\infty\quad \text{and}\quad \sum_{j=1}^{\infty}\frac{\gamma'_j}{\gamma_j^{\alpha'/\alpha}}<\infty. \]
Then the worst-case error $P_{\alpha',\bsgamma',b^m}(\bsq)$ is independent of $s$ and bounded by
\[ P_{\alpha',\bsgamma',b^m}(\bsq) \le Cb^{-(\alpha'/(\alpha \lambda)-\delta)m}, \]
for arbitrarily small $\delta>0$.
\end{enumerate}
\end{corollary}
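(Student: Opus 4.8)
The plan is to combine Theorem~\ref{thm:polylat} with the bound on $\rho_{\alpha,\bsgamma,b^m}(\bsq)$ from Proposition~\ref{prop:cbc_lat2}, in exact parallel to how Corollary~\ref{cor:lat} follows from Theorem~\ref{thm:lat} and Proposition~\ref{prop:cbc_lat}. First I would substitute the upper bound $\rho_{\alpha,\bsgamma,b^m}(\bsq)\leq \big((b^m-1)^{-1}\sum_{\emptyset\neq u}\gamma_u^\lambda((b-1)/(b^{2\alpha\lambda}-b))^{|u|}\big)^{1/\lambda}$ from \eqref{bound_rho_polylat} into the conclusion of Theorem~\ref{thm:polylat}, raising it to the power $\alpha'/\alpha$. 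This yields the analogue of \eqref{eq:bound_cbc_lat}:
\[
 P_{\alpha',\bsgamma',b^m}(\bsq) \leq \left(\frac{1}{b^m-1}\sum_{\emptyset\neq u \subseteq \{1,\ldots,s\}}\gamma_u^\lambda\left(\frac{b-1}{b^{2\alpha\lambda}-b}\right)^{|u|}\right)^{\alpha'/(\alpha\lambda)}\sum_{\emptyset\neq u \subseteq \{1,\ldots,s\}}\frac{\gamma'_u}{\gamma_u^{\alpha'/\alpha}}\left(\frac{b^{2\alpha'-1}(b-1)}{b^{2\alpha'-1}-1}\right)^{|u|}(m+1)^{|u|-1},
\]
valid for any $1/(2\alpha)<\lambda\leq 1$.

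For item~1, I would write $b^m-1 \geq b^{m-1}$ (or more simply note $b^m-1\geq c b^m$ for a constant depending only on $b$) so that the first factor is at most a constant times $b^{-m\alpha'/(\alpha\lambda)}$ times $s^{q\alpha'/(\alpha\lambda)}$ using the first summability hypothesis. The second factor is at most $C_\delta s^{q'}b^{\delta m}$ by the second summability hypothesis, since $\delta<\alpha'/(\alpha\lambda)$ guarantees the exponent of $b^m$ in the product $b^{-m\alpha'/(\alpha\lambda)+\delta m}$ is negative, giving decay. Multiplying the two factors produces the claimed bound $Cs^{q\alpha'/(\alpha\lambda)+q'}b^{-(\alpha'/(\alpha\lambda)-\delta)m}$; setting $q=q'=0$ gives dimension-independence. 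For item~2, in the product-weight case one has $\sum_{\emptyset\neq u}\gamma_u^\lambda((b-1)/(b^{2\alpha\lambda}-b))^{|u|}\leq \prod_{j=1}^\infty(1+\gamma_j^\lambda(b-1)/(b^{2\alpha\lambda}-b))<\infty$ under $\sum_j\gamma_j^\lambda<\infty$, which handles the first factor with $q=0$; for the second factor one invokes the argument of \cite[Lemma~3]{HN03} (exactly as in the proof of Corollary~\ref{cor:lat}), using $\sum_j \gamma'_j/\gamma_j^{\alpha'/\alpha}<\infty$ to absorb the $(m+1)^{|u|-1}$ factor into an arbitrarily small loss $b^{\delta m}$ in the rate with $q'=0$.

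I expect the only genuine subtlety to be the treatment of the polynomial-in-$m$ factor $(m+1)^{|u|-1}$ in the second sum: for general weights this must be controlled by the hypothesis as stated (it is built into the second summability condition), while for product weights one needs the standard trick that $\sum_{\emptyset\neq u}\gamma'_u/\gamma_u^{\alpha'/\alpha}(\cdots)^{|u|}(m+1)^{|u|-1}$ can be bounded, for any fixed $\delta>0$, by $C_\delta b^{\delta m}$ — this is precisely \cite[Lemma~3]{HN03}, which shows that for a summable sequence $(c_j)$ one has $\sum_u\prod_{j\in u}c_j\,(m+1)^{|u|-1}=O(b^{\delta m})$ since $\sum_u\prod_{j\in u}c_j(m+1)^{|u|}\leq\prod_j(1+(m+1)c_j)\leq\exp((m+1)\sum_j c_j)$ only after reindexing by a geometric factor, so one applies it after pulling out a factor $(m+1)^{-1}$ and using that $(m+1)=O(b^{\delta m})$. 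Everything else is a direct substitution and the proof is therefore short; I would simply state: "The first item follows immediately from the displayed bound above together with the hypotheses. The second item follows by combining the bound above with the arguments of \cite[Theorem~4]{Kuo03} and \cite[Lemma~3]{HN03}."
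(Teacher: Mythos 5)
Your proposal is correct and follows essentially the same route as the paper: the paper itself only remarks that the corollary is proved ``similarly to Corollary~\ref{cor:lat},'' which for item~1 means substituting the bound \eqref{bound_rho_polylat} on $\rho_{\alpha,\bsgamma,b^m}(\bsq)$ from Proposition~\ref{prop:cbc_lat2} into Theorem~\ref{thm:polylat} to get the analogue of \eqref{eq:bound_cbc_lat}, and for item~2 means invoking the arguments of \cite[Theorem~4]{Kuo03} and \cite[Lemma~3]{HN03}, exactly as you do. One small remark on your caveat about the $(m+1)^{|u|-1}$ factor: there is no need to ``pull out $(m+1)^{-1}$ and use $(m+1)=O(b^{\delta m})$'' — since $\sum_{\emptyset\neq u}\prod_{j\in u}c_j\,(m+1)^{|u|-1}\le \sum_{\emptyset\neq u}\prod_{j\in u}c_j\,(m+1)^{|u|}\le \prod_j\bigl(1+(m+1)c_j\bigr)$ and \cite[Lemma~3]{HN03} (via the split $j\le J$ versus $j>J$ with $\sum_{j>J}c_j$ small, not the crude bound $\exp\bigl((m+1)\sum_j c_j\bigr)$) gives $\prod_j\bigl(1+(m+1)c_j\bigr)=O(b^{\delta m})$ directly for any $\delta>0$.
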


\subsection{Bound on weighted star discrepancy}
Finally we study tractability properties of the weighted star discrepancy for polynomial lattice rules constructed by the CBC algorithm based on the criterion $P_{\alpha,\bsgamma, b^m}$.
In what follows, we write $D^*_{\bsgamma, b^m}(\bsq)$ to denote the weighted star discrepancy for a polynomial lattice point set with a given modulus $p$ and generating vector $\bsq$.

For a non-empty subset $u\subseteq \{1,\ldots,s\}$ we write
\[ P^{\perp}_{u,0}(p,\bsq)= \{\bsk_u \in (\NN\cup \{0\})^{|u|}\setminus \{\bszero\} \mid (\bsk_u,\bszero)\in P^{\perp}(p,\bsq)\}, \]
and 
\[ G^*_{m,u}= \left\{ \bsk_u\in (\NN\cup \{0\})^{|u|}\setminus \{\bszero\} \mid k_j<b^m,\forall j\in u\right\}. \]
According to \cite[Corollary~10.16]{DPbook}, the weighted star discrepancy for a polynomial lattice point set is bounded above by
\[ D^*_{\bsgamma, b^m}(\bsq) \leq \sum_{\emptyset \neq u \subseteq \{1, \ldots, s\}}\gamma_u \left[ 1-\left(1-\frac{1}{N}\right)^{|u|}+R_{u, b^m}(\bsq)\right], \]
where
\[ R_{u, b^m}(\bsq) = \sum_{\bsk_u\in P_{u,0}^\perp(p,\bsq) \cap G^*_{m,u}}\prod_{j\in u}\tilde{r}(k_j),\]
with
\[ \tilde{r}(k) = \begin{cases} 1 & \text{if $k=0$,} \\ \frac{1}{b^a \sin(\pi \kappa_{a-1}/b)} & \text{if $k=\kappa_0+\kappa_1 b+\cdots + \kappa_{a-1}b^{a-1}$ with $\kappa_{a-1}\neq 0$.} \end{cases} \]
The weighted star discrepancy is further bounded above as follows.

\begin{theorem}\label{thm:polylat_disc}
Let $s,m\in \NN$, $p\in \FF_b[x]$ with $\deg(p)=m$, and $\bsq\in (G_m\setminus \{0\})^s$ be given. For any $\alpha> 1/2$ and $\bsgamma,\bsgamma'\in \RR_{\geq 0}^{\NN}$ such that $\gamma_v\geq \gamma_u$ whenever $v\subset u$, we have 
\[ D^*_{\bsgamma', b^m}(\bsq) \leq \sum_{\emptyset \neq u \subseteq \{1, \ldots, s\}}\gamma'_u \left[ 1-\left(1-\frac{1}{N}\right)^{|u|}+(b-1)\frac{(\rho_{\alpha,\bsgamma,b^m}(\bsq))^{1/(2\alpha)}}{\gamma_u^{1/(2\alpha)}}(k_b(m+1))^{|u|}\right], \]
where $k_2=1$ and $k_b=1+1/\sin(\pi/b)$ for a prime $b>2$.
\end{theorem}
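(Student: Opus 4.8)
The plan is to follow the same route as in the proof of Theorem~\ref{thm:lat}: reduce the bound on $D^*_{\bsgamma',b^m}(\bsq)$ to the bound on $R_{u,b^m}(\bsq)$ supplied just before the statement, then bound $R_{u,b^m}(\bsq)$ in terms of $\phi_u(\bsq)$ (equivalently $\phi_{u,0}(\bsq)$, the minimal $\mu$-value over the nonzero dual lattice restricted to $u$ with possible zero components allowed), and finally use the relation between $\phi_{u,0}(\bsq)$ and $\rho_{\alpha,\bsgamma,b^m}(\bsq)$ coming from the monotonicity assumption $\gamma_v\ge\gamma_u$ whenever $v\subset u$, exactly as in \eqref{eq:bound_phi} and its analogue \eqref{eq:lower_bound_mu0}. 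So the first step is to quote the displayed bound $D^*_{\bsgamma',b^m}(\bsq)\le\sum_u\gamma'_u[1-(1-1/N)^{|u|}+R_{u,b^m}(\bsq)]$ and reduce the problem to bounding each $R_{u,b^m}(\bsq)$.

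Next I would bound $R_{u,b^m}(\bsq)$ combinatorially. Writing each $\bsk_u\in P^\perp_{u,0}(p,\bsq)\cap G^*_{m,u}$ according to the vector of digit-lengths $(\mu(k_j))_{j\in u}$ (with $\mu(0)=0$), and using $\tilde r(k)\le k_b b^{-\mu(k)}$ for $k\neq 0$ together with the counting lemma \cite[Lemma~13.8]{DPbook} (as invoked in the proof of Theorem~\ref{thm:polylat}), one gets that $R_{u,b^m}(\bsq)$ is dominated by a sum over multi-indices $\bsell$ of the form $\prod_{j}k_b\,b^{-\ell_j}$ times the number of dual-lattice vectors with those digit lengths. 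Because here components are allowed to vanish, the relevant minimum is $\phi_{u,0}(\bsq)=\min_{\emptyset\neq v\subseteq u}\phi_v(\bsq)$, and an argument parallel to the one for $\phi_u(\bsq)$ in the proof of Theorem~\ref{thm:polylat} gives $\phi_{u,0}(\bsq)\le m+|u|$ as well as the lower bound $\phi_{u,0}(\bsq)\ge\phi_u(\bsq)$. Summing the geometric-type series via Lemma~\ref{lem:inequ1} and using $b^{-2\alpha \phi_{u,0}(\bsq)}$ is tiny, a cruder estimate suffices since we only need an $(m+1)$-polynomial factor: the sum $\sum_{\bsell}$ over $|\bsell_1|\le m+|u|$ of $\prod k_b b^{-\ell_j}(b-1)^{|u|}b^{|\bsell|_1-\phi}$ collapses, after bounding the number of admissible $\bsell$ by $(m+1)^{|u|}$ and extracting a factor $(b-1)$, to something of the form $(b-1)(k_b(m+1))^{|u|}b^{-\phi_u(\bsq)}$.

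Finally I would substitute the relation between $\phi_u(\bsq)$ and $\rho_{\alpha,\bsgamma,b^m}(\bsq)$. From $\rho_{\alpha,\bsgamma,b^m}(\bsq)=\max_u\gamma_u b^{-2\alpha\phi_u(\bsq)}$ we get $b^{-\phi_u(\bsq)}\le(\rho_{\alpha,\bsgamma,b^m}(\bsq)/\gamma_u)^{1/(2\alpha)}$, and the monotonicity of the weights promotes this to the same bound with $\phi_u$ replaced by $\phi_{u,0}$, which is what controls $R_{u,b^m}(\bsq)$. Plugging in yields
\[ R_{u,b^m}(\bsq)\le (b-1)\,\frac{(\rho_{\alpha,\bsgamma,b^m}(\bsq))^{1/(2\alpha)}}{\gamma_u^{1/(2\alpha)}}\,(k_b(m+1))^{|u|}, \]
and summing over $u$ against $\gamma'_u$ gives the claimed inequality.

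The main obstacle I expect is the bookkeeping in the $R_{u,b^m}(\bsq)$ estimate: correctly handling the possibility that some coordinates $k_j$ are zero (so that the effective dimension of the ``active'' sub-block varies with $v\subseteq u$), getting the counting bound \cite[Lemma~13.8]{DPbook} applied to the right sub-block, and making sure the constant $k_b$ — which comes from $1/\sin(\pi\kappa_{a-1}/b)\le 1/\sin(\pi/b)$ for $b>2$ and is trivial for $b=2$ — is folded in cleanly while keeping only an $(m+1)^{|u|}$ (not $(m+1)^{|u|-1}$) polynomial factor, since here we deliberately use a looser bound than in Theorem~\ref{thm:polylat} to absorb the truncation to $G^*_{m,u}$.
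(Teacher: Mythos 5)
Your overall strategy coincides with the paper's: (i) reduce to bounding $R_{u,b^m}(\bsq)$ via the displayed inequality from \cite[Corollary~10.16]{DPbook}; (ii) introduce $\phi_{u,0}(\bsq)=\min_{\bsk_u\in P^\perp_{u,0}(p,\bsq)}\mu(\bsk_u)$, use the weight-monotonicity $\gamma_v\ge\gamma_u$ to promote \eqref{eq:bound_phi} to $\phi_{u,0}(\bsq)\ge\frac{1}{2\alpha}\log_b(\gamma_u/\rho_{\alpha,\bsgamma,b^m}(\bsq))$; (iii) bound $R_{u,b^m}(\bsq)\le (b-1)(k_b(m+1))^{|u|}b^{-\phi_{u,0}(\bsq)}$ and substitute. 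Where you diverge is step (iii): the paper does not rederive this bound at all but simply invokes \cite[Theorem~4.34]{Niedbook}, which is exactly the inequality relating $R_{u,b^m}$ to the figure of merit $\phi_{u,0}$ in terms of $(m+1)$ and $k_b$. You instead sketch a direct combinatorial derivation via $\tilde r(k)\le k_b b^{-\mu(k)}$, \cite[Lemma~13.8]{DPbook}, and Lemma~\ref{lem:inequ1}, which is a legitimate alternative route in principle, but it is not quite right as written: the constraint coming from $G^*_{m,u}$ is $\ell_j\le m$ coordinatewise (hence at most $(m+1)^{|u|}$ choices of $\bsell_u$), not $|\bsell_u|_1\le m+|u|$; the crude trivial bound $\phi_{u,0}(\bsq)\le m+|u|$ and Lemma~\ref{lem:inequ1} play no role here (they were needed for Theorem~\ref{thm:polylat}, not this one); and the constant $k_b=1+1/\sin(\pi/b)$ is not simply a bound on $\tilde r(k)b^{\mu(k)}$ for $k\ne0$ (that would only give $1/\sin(\pi/b)$) — the additive ``$1$'' absorbs the $k_j=0$ contributions $\tilde r(0)=1$, which is precisely the bookkeeping that Niederreiter's Theorem 4.34 handles and that your sketch glosses over. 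So your proposal is structurally faithful to the paper but would need to be tightened substantially to actually deliver the constant $(b-1)(k_b(m+1))^{|u|}$; simply quoting \cite[Theorem~4.34]{Niedbook}, as the paper does, is both shorter and safer.
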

\begin{proof}
It suffices to give a bound on $R_{u, b^m}(\bsq)$. Let us define
\[ \phi_{u,0}(\bsq) := \min_{\bsk_u\in P^\perp_{u,0}(p,\bsq)}\mu(\bsk_u). \]
By assuming $\gamma_v\geq \gamma_u$ whenever $v\subset u$, it follows from \eqref{eq:bound_phi} that
\[ \phi_{u,0}(\bsq) = \min_{\emptyset \neq v\subseteq u}\phi_{v}(\bsq)\geq \min_{\emptyset \neq v\subseteq u}\frac{1}{2\alpha}\log_b \frac{\gamma_v}{\rho_{\alpha,\bsgamma,b^m}(\bsq)} = \frac{1}{2\alpha}\log_b \frac{\gamma_u}{\rho_{\alpha,\bsgamma,b^m}(\bsq)}. \]
Applying the result from \cite[Theorem~4.34]{Niedbook}, $R_{u, b^m}(\bsq)$ is bounded by
\begin{align*}
R_{u, b^m}(\bsq) & \leq (b-1)\frac{(k_b(m+1))^{|u|}}{b^{\phi_{u,0}(\bsq)}} \\
& \leq (b-1)(k_b(m+1))^{|u|}\left(\frac{\rho_{\alpha,\bsgamma, b^m}(\bsq)}{\gamma_u}\right)^{1/(2\alpha)},
\end{align*}
from which the result immediately follows.
\end{proof}

From this bound on the weighted star discrepancy and Proposition~\ref{prop:cbc_lat2}, we obtain a result similar to Corollary~\ref{cor:lat2}.
\begin{corollary}\label{cor:polylat2}
Let $s,m\in \NN$, $\alpha,\alpha'> 1/2$ and $\bsgamma,\bsgamma'\in \RR_{\geq 0}^{\NN}$ such that $\gamma_v\geq \gamma_u$ whenever $v\subset u$. Let $p\in \FF_b[x]$ with $\deg(p)=m$ be irreducible and $\bsq\in (G_m\setminus \{0\})^s$ be constructed by Algorithm~\ref{alg:cbc_polylat} based on the criterion $P_{\alpha,\bsgamma,b^m}$. Then the following holds true:
\begin{enumerate}
\item For general weights $\bsgamma$ and $\bsgamma'$, assume that there exist $\lambda, \delta,q,q',q''\geq 0$ such that $1/(2\alpha)<\lambda<1$, $0 < \delta<1/(\alpha\lambda)$, 
\[ \sup_{s\in \NN}\frac{1}{s^q}\sum_{\emptyset\neq u \subseteq \{1, \ldots, s\}}\gamma^{\lambda}_u\left( \frac{b-1}{b^{2\alpha\lambda}-b}\right)^{|u|} <\infty, \quad \sup_{s\in \NN}\frac{1}{s^{q'}}\sum_{\emptyset\neq u \subseteq \{1, \ldots, s\}}\gamma'_u |u|<\infty, \]
and 
\[ \sup_{s, m\in \NN}\frac{1}{s^{q''}b^{m \delta}}\sum_{\emptyset \neq u \subseteq \{1, \ldots, s\}}\frac{\gamma'_u}{\gamma_u^{1/(2\alpha)}} (k_b(m+1))^{|u|} <\infty. \]
Then the weighted star discrepancy $D^*_{\bsgamma', b^m}(\bsq)$ depends only polynomially on $s$ and is bounded by 
\[D^*_{\bsgamma', b^m}(\bsq) \le Cs^{\max(q',q/(2\alpha\lambda)+q'')}b^{-m(1/(2\alpha \lambda)-\delta)}, \]
for some constant $C>0$ which is independent of $s$ and $m$. If the above conditions hold for $q=q'=q''=0$, the the weighted star discrepancy $D^*_{\bsgamma', b^m}(\bsq)$ is bounded independently of $s$.
\item In particular, in the case of product weights $\bsgamma$ and $\bsgamma'$, assume that there exists $\lambda\in (1/(2\alpha),1]$ such that
\[ \sum_{j=1}^{\infty}\gamma_j^{\lambda}<\infty \quad \text{and}\quad \sum_{j=1}^{\infty}\frac{\gamma'_j}{\gamma_j^{1/(2\alpha)}}<\infty. \]
Then the weighted star discrepancy $D^*_{\bsgamma', b^m}(\bsq)$ is independent of $s$ and bounded by
\[D^*_{\bsgamma', b^m}(\bsq) \le C b^{-m( 1/(2\alpha \lambda)- \delta )}, \]
for arbitrarily small $\delta>0$.
\end{enumerate}
\end{corollary}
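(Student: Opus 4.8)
The plan is to feed the bound on $\rho_{\alpha,\bsgamma,b^m}(\bsq)$ from Proposition~\ref{prop:cbc_lat2} into Theorem~\ref{thm:polylat_disc} and then estimate the two groups of terms that appear separately. Writing $N=b^m$, Theorem~\ref{thm:polylat_disc} gives
\[ D^*_{\bsgamma', b^m}(\bsq) \leq \underbrace{\sum_{\emptyset \neq u \subseteq \{1, \ldots, s\}}\gamma'_u\left[ 1-\left(1-\frac{1}{N}\right)^{|u|}\right]}_{=:T_1} + (b-1)\bigl(\rho_{\alpha,\bsgamma,b^m}(\bsq)\bigr)^{1/(2\alpha)}\underbrace{\sum_{\emptyset \neq u \subseteq \{1, \ldots, s\}}\frac{\gamma'_u}{\gamma_u^{1/(2\alpha)}}(k_b(m+1))^{|u|}}_{=:T_2}, \]
and Proposition~\ref{prop:cbc_lat2} lets us replace $\rho_{\alpha,\bsgamma,b^m}(\bsq)$ by $\bigl(\frac{1}{b^m-1}\sum_{u\neq\emptyset}\gamma_u^\lambda(\frac{b-1}{b^{2\alpha\lambda}-b})^{|u|}\bigr)^{1/\lambda}$ for any $\lambda\in(1/(2\alpha),1]$. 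Thus it remains to bound $T_1$, the sum controlling $\rho$, and $T_2$.

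For the first item I would bound each piece using the three summability hypotheses. Using $1-(1-1/N)^{|u|}\leq |u|/N=|u|b^{-m}$ (as in the discussion preceding Corollary~\ref{cor:lat2}) and the second hypothesis, one gets $T_1\leq Cs^{q'}b^{-m}$. Proposition~\ref{prop:cbc_lat2} together with the first hypothesis gives $\bigl(\rho_{\alpha,\bsgamma,b^m}(\bsq)\bigr)^{1/(2\alpha)}\leq Cs^{q/(2\alpha\lambda)}(b^m-1)^{-1/(2\alpha\lambda)}$, while the third hypothesis gives $T_2\leq Cs^{q''}b^{\delta m}$. Multiplying these and using $b^m-1\geq b^m/2$ for $m\geq1$, the second contribution is at most $Cs^{q/(2\alpha\lambda)+q''}b^{-m(1/(2\alpha\lambda)-\delta)}$. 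Since $\lambda>1/(2\alpha)$ forces $1/(2\alpha\lambda)<1$, we have $b^{-m}\leq b^{-m(1/(2\alpha\lambda)-\delta)}$, so $T_1$ is dominated by the second contribution and the estimates combine to
\[ D^*_{\bsgamma',b^m}(\bsq)\leq Cs^{\max(q',\,q/(2\alpha\lambda)+q'')}b^{-m(1/(2\alpha\lambda)-\delta)}, \]
which is the first item; setting $q=q'=q''=0$ yields the dimension-independent bound.

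For the second item I would verify that, for product weights, $\sum_j\gamma_j^\lambda<\infty$ and $\sum_j\gamma'_j/\gamma_j^{1/(2\alpha)}<\infty$ imply the three hypotheses of the first item with $q=q'=q''=0$, after which the claimed estimate follows immediately. Here the relevant sums factor over coordinates: $\sum_{u\neq\emptyset}\gamma_u^\lambda(\frac{b-1}{b^{2\alpha\lambda}-b})^{|u|}=\prod_j\bigl(1+\frac{b-1}{b^{2\alpha\lambda}-b}\gamma_j^\lambda\bigr)\leq\exp\bigl(\frac{b-1}{b^{2\alpha\lambda}-b}\sum_j\gamma_j^\lambda\bigr)$ is bounded uniformly in $s$; $\sum_{u\neq\emptyset}|u|\prod_{j\in u}\gamma'_j\leq\bigl(\sum_k\gamma'_k\bigr)\prod_j(1+\gamma'_j)$ is finite because $\sum_j\gamma'_j<\infty$, which follows from $\sum_j\gamma'_j/\gamma_j^{1/(2\alpha)}<\infty$ once one notes that $\sum_j\gamma_j^\lambda<\infty$ forces $\gamma_j\to0$ and hence $\gamma'_j\leq\gamma'_j/\gamma_j^{1/(2\alpha)}$ for all large $j$; and $\sum_{u\neq\emptyset}\frac{\gamma'_u}{\gamma_u^{1/(2\alpha)}}(k_b(m+1))^{|u|}=\prod_j\bigl(1+k_b(m+1)\frac{\gamma'_j}{\gamma_j^{1/(2\alpha)}}\bigr)$.

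The only step that needs genuine care is this last product, where I expect the main obstacle to lie: one must show $\prod_j\bigl(1+k_b(m+1)\gamma'_j/\gamma_j^{1/(2\alpha)}\bigr)\leq C_\delta b^{\delta m}$ for arbitrarily small $\delta>0$, uniformly in $s$ and $m$. The naive estimate $\exp\bigl(k_b(m+1)\sum_j\gamma'_j/\gamma_j^{1/(2\alpha)}\bigr)$ is of the form $b^{cm}$ with a fixed $c>0$ and is therefore insufficient; instead one splits the index set into a finite ``heavy'' block $\{j:\gamma'_j/\gamma_j^{1/(2\alpha)}>\varepsilon\}$, whose contribution is a polynomial in $m$ of fixed degree (hence $o(b^{\delta m})$), and a ``light'' tail whose coefficient sum can be made as small as we wish by taking $\varepsilon$ small, so that the exponential of $k_b(m+1)$ times that sum is $\leq b^{\delta m}$ up to a constant. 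This is exactly the argument of \cite[Lemma~3]{HN03} (see also \cite[Lemma~1]{Joe06} and \cite[Theorem~4]{Kuo03}) and can simply be invoked. Everything else --- the arithmetic with $N=b^m$, the bound on $b^m-1$, and the collection of powers of $s$ --- is routine.
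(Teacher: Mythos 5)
Your proposal is correct and follows exactly the strategy the paper indicates: plug the bound on $\rho_{\alpha,\bsgamma,b^m}(\bsq)$ from Proposition~\ref{prop:cbc_lat2} into Theorem~\ref{thm:polylat_disc}, bound the two resulting sums separately (using $1-(1-1/N)^{|u|}\leq|u|/N$ for the first, and the third summability hypothesis for the second), and for the product-weight case invoke the argument of \cite[Lemma~3]{HN03} to show that $\prod_j(1+k_b(m+1)\gamma'_j/\gamma_j^{1/(2\alpha)})=O(b^{\delta m})$ uniformly in $s$ --- the same reference the paper points to for the analogous Corollary~\ref{cor:lat}.
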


\section*{Acknowledgements}

The authors are grateful to the reviewers for very helpful remarks. Josef Dick is partly supported by the Australian Research Council Discovery Project DP190101197. J.D. would like to thank T.G. for his hospitality during his visit at U. Tokyo and T.G. would like to thank J.D. for his hospitality during his visit at UNSW, where part of this research was carried out.



\begin{thebibliography}{99}
\bibitem{Niedbook} H. Niederreiter, Random Number Generation and Quasi-Monte Carlo Methods, SIAM, Philadelphia, 1992. 
\bibitem{SJbook} I.~H. Sloan, S. Joe, Lattice Methods for Multiple Integration, Oxford Science Publications, New York, 1994. 
\bibitem{DPbook} J. Dick, F. Pillichshammer, Digital Nets and Sequences: Discrepancy Theory and Quasi-Monte Carlo Integration, Cambridge University Press, Cambridge, 2010. 
\bibitem{DKS13} J. Dick, F.~Y. Kuo, I.~H. Sloan, High-dimensional integration: The quasi-Monte Carlo way, Acta Numer. 22 (2013) 133--288. 
\bibitem{LPbook} G. Leobacher, F. Pillichshammer, Introduction to Quasi-Monte Carlo Integration and Applications, Birkh\"{a}user, Cham, 2014.
\bibitem{SKJ02}  I.~H. Sloan, F.~Y. Kuo, S. Joe, Constructing randomly shifted lattice rules in weighted Sobolev spaces, SIAM J. Numer. Anal. 40 (2002) 1650--1665.
\bibitem{SKJ02b} I.~H. Sloan, F.~Y. Kuo, S. Joe, On the step-by-step construction of quasi-Monte Carlo integration rules that achieve strong tractability error bounds in weighted Sobolev spaces, Math. Comp. 71 (2002) 1609--1640. 
\bibitem{SR02} I.~H. Sloan, A.~V. Reztsov, Component-by-component construction of good lattice rules, Math. Comp. 71 (2002) 263--273.  
\bibitem{Kuo03} F.~Y. Kuo, Component-by-component constructions achieve the optimal rate of convergence for multivariate integration in weighted Korobov and Sobolev spaces, J. Complexity 19 (2003) 301--320. 
\bibitem{DKPS05} J. Dick, F.~Y. Kuo, F. Pillichshammer, I.~H. Sloan, Construction algorithms for polynomial lattice rules for multivariate integration, Math. Comp. 74 (2005) 1895--1921.
\bibitem{CKN06} R. Cools, F.~Y. Kuo, D. Nuyens, Constructing embedded lattice rules for multivariable integration, SIAM J. Sci. Comput. 28 (2006) 2162--2188.
\bibitem{NC06} D. Nuyens, R. Cools, Fast algorithms for component-by-component construction of rank-1 lattice rules in shift-invariant reproducing kernel Hilbert spaces, Math. Comp. 75 (2006) 903--920.
\bibitem{SW98} I.~H. Sloan, H. Wo\'{z}niakowski, When are quasi-Monte Carlo algorithms efficient for high-dimensional integrals? J. Complexity 14 (1998) 1--33.
\bibitem{DP05} J. Dick, F. Pillichshammer, Multivariate integration in weighted Hilbert spaces based on Walsh functions and weighted Sobolev spaces, J. Complexity 21 (2005) 149--195. 
\bibitem{H98} F.~J. Hickernell, A generalized discrepancy and quadrature error bound, Math. Comp. 67 (1998) 299--322.
\bibitem{RS62} J.~B. Rosser, L. Schoenfeld, Approximate formulas for some functions of prime numbers, Illinois J. Math. 6 (1962) 64--94.
\bibitem{SW01} I.~H. Sloan, H. Wo\'{z}niakowski, Tractability of multivariate integration for weighted Korobov classes, J. Complexity 17 (2001) 697--721.
\bibitem{KKNU19} P. Kritzer, F.~Y. Kuo, D. Nuyens, M. Ullrich, Lattice rules with random $n$ achieve nearly the optimal $\mathcal{O}(n^{-\alpha - 1/2})$ error independently of the dimension, J. Approx. Theory 240 (2019) 96--113.
\bibitem{SK83} I.~H. Sloan, P. Kachoyan, Lattices for multiple integration, in: Mathematical Programming and Numerical Analysis Workshop, pp.~147--165, The Australian National University, Canberra, 1984.
\bibitem{NS90} H. Niederreiter, I.~H. Sloan, Lattice rules for multiple integration and discrepancy, Math. Comp. 54 (1990) 303--312.
\bibitem{HN03} F.~J. Hickernell, H. Niederreiter, The existence of good extensible rank-1 lattices, J. Complexity 19 (2003) 286--300.
\bibitem{KSS12} F.~Y. Kuo, C. Schwab, I.~H. Sloan, Quasi-Monte Carlo finite element methods for a class of elliptic partial differential equations with random coefficients, SIAM J. Numer. Anal. 50 (2012), 3351--3374.
\bibitem{Joe06} S. Joe, Construction of good rank-1 lattice rules based on the weighted star discrepancy, in: Monte Carlo and Quasi-Monte Carlo Methods 2004, pp.~181--196, Springer, 2006. 
\bibitem{HPT19} A. Hinrichs, F. Pillichshammer, S. Tezuka, Tractability properties of the weighted star discrepancy of the Halton sequence, J. Comput. Appl. Math. 350 (2019) 46--54. 
\bibitem{W02} X. Wang, A constructive approach to strong tractability using quasi-Monte Carlo algorithms, J. Complexity 18 (2002) 683--701.
\bibitem{W03} X. Wang, Strong tractability of multivariate integration using quasi-Monte Carlo algorithms, Math. Comp. 72 (2003) 823--838.
\bibitem{LT94} G. Larcher, C. Traunfellner, On the numerical integration of Walsh series by number-theoretic methods, Math. Comp. 63 (1994) 277--291.
\end{thebibliography}
\end{document}